\newtheorem{Lemma}{Lemma}[section]
\newtheorem{Theorem}{Theorem}
\newtheorem{Corollary}[Lemma]{Corollary}
\newtheorem{Remark}[Lemma]{Remark}
\newtheorem{Definition}[Lemma]{Definition}
\newtheorem{Hypothesis}[Lemma]{Hypothesis}
\newcommand{\R}{\mathbb{R}}
\newcommand{\C}{\mathbb{C}}
\newcommand{\N}{\mathbb{N}}
\newcommand{\Z}{\mathbb{Z}}
\newcommand{\eps}{\varepsilon}
\def\Im{\mathop{\mathrm{Im}}}
\newcommand{\rmO}{\mathrm{O}}
\title{ The role of boundary constraints in simulating a nonlocal Gray-Scott model
 \thanks{This work is supported in part by the National Science
  Foundation under grant DMS-1911742 (GJ).} 
 }
\author{Loic Cappanera\thanks{Department of Mathematics, 
University of Houston, Houston, TX
(\email{lmcappan@central.uh.edu}).}
\and Gabriela Jaramillo\thanks{Department of Mathematics, University of Houston, Houston, TX (corresponding author, \email{gabriela@math.uh.edu})}
}
\begin{document}
\emergencystretch 3em


\maketitle

\begin{abstract}
{
We present a second-order algorithm for approximating solutions to nonlocal diffusive processes in reaction-diffusion equations. The numerical scheme relies on a quadrature method for the spatial discretization and a second-order Adams-Bashford method for the time marching. This algorithm is then used to simulate a nonlocal Gray-Scott model, known for generating interesting structures including periodic patterns, traveling waves, pulse and multi-pulse solutions. Our main goal is to study the impact of boundary constraints on the formation of stationary pulse solutions. We consider nonlocal Dirichlet and Neumann boundary constraints, as well as what we refer to as `free' boundary conditions. In addition, we investigate the effects of using different convolution kernels, fat- or thin-tailed, on the formation of these localized solutions. Our numerical results show that when the spread of the kernel is large, i.e. when the model is nonlocal, both the type of kernel and the type of boundary constraint used have a strong impact on the solution's profile.
}
\end{abstract}

\begin{keywords}
nonlocal diffusion operator,  integro-differential equations,  
quadrature method, gray-scott model, pulses.
\end{keywords}

\begin{AMS}
41A55 (approximate quadratures),
45J05 (integro-differential equation),
45P05 (integral operators),
65R20 (numerical methods for integral equations).
\end{AMS}

\section{Introduction}

In this paper, we use the quadrature scheme developed in \cite{jaramillo2021}, together with the method of lines,
to numerically study the effects of different boundary constraints on pulse solutions of the 1-d {\it nonlocal} Gray-Scott model. More precisely, we study the system
\begin{equation}\label{e:GS}
\begin{split}
u_t & = d_u Ku + A(1-u) - uv^2,\\
v_t & = d_v Kv  - Bv + uv^2,
\end{split}
\end{equation}
posed on a space domain $\Omega =[-L,L] \subset \R$ and a time interval $[0,T]$, with parameters $d_u, d_v, A, B, L>0$, and with $K$ defined via the integral operator
\begin{equation}\label{e:intoperator}
 Ku = \int_\R (u(y) - u(x)) \gamma(|x-y|) \;dy, \quad \gamma(z) \in L^1(\R) \cap L^2(\R). 
 \end{equation}
 
The type of boundary constraints we use are motivated by our choice of convolution kernel, $\gamma(z) $, which is assumed to be a radially symmetric and spatially extended function in $L^1(\R) \cap L^2(\R)$. As a result, in order to properly defined the operator $K$, we require information about the unknowns, $u$ and $v$, on all of $\R$.
 In other words, we need to know the value of these variables in the complement of our computational domain, $\Omega^c$.
 We use three different types of boundary constraints to determine the value of $u$ and $v$ in this region. A nonlocal Dirichlet boundary constraint, where the values of the unknown variables are prescribed in this outer domain.
 A homogeneous nonlocal Neumann boundary constraint, where the nonlocal flux is prescribed to be zero outside the computational domain. And a `free' boundary condition, where the decay rate of the unknown is prescribed but the value of the solution at the boundary is determined by the system. We use this last constraint as a means to approximate the solution when $\Omega = \R$.

Our motivation for studying the effects of boundary constraints comes from pattern forming systems 
that involve nonlocal diffusion, or long-range interactions, described by integral operators of the form given in \eqref{e:intoperator}. Examples include vegetation \cite{Eigentler2020, eigentler2018, bennett2019, baudena2013, pueyo2008, pueyo2010}, population \cite{othmer1988, hutson2003, lutscher2005, coville2007, li2018} and competition models \cite{sherratt2016, hetzer2012, kao2009random, hutson2003}, as well as models for the spread of infections and diseases \cite{medlock2003, Du_2023, kuniya2018, zhao2020}.
Because spatial discretizations of these operators using finite elements or quadrature methods lead to dense matrices,
most numerical schemes developed to study these systems use  instead Fourier spectral methods \cite{shima2004, avitabile2014, coombes2012, folias2005}.
However, this choice implicitly assumes that the equations satisfy periodic boundary conditions, which do not necessarily capture the boundary constraints imposed by the model or application.
Although this issue is recognized, a common justification for using a spectral approach is that most patterns of interest, like pulses, spot, and striped patterns, have a small length scale. It is then reasonable to expect that if the computational domain is large enough, boundary effects will be small. 
Thus, simulations are assumed to be valid as long as one restricts attention to regions well within the domain or, in the case of evolution problems, provided one stops the simulation before the boundary effects take place.

Nonlocal interactions such as those described by \eqref{e:intoperator} not only lead to computational challenges but also result in analytical difficulties.
For instance, one cannot use standard theory from spatial dynamics
or perturbation theory to directly study these problems. An alternative is to use convolution operators with Fourier symbols that are rational functions, since in this case one can precondition the equations with an appropriate differential operator and thus turn the integral equations into a set of partial differential equations. One is then able to apply techniques from dynamical systems to analyze these equations \cite{laing2006-2, Laing2014, coombes2005, coombes2003waves} and to also run numerical simulations using finite difference or spectral methods \cite{laing2006, avitabile2014, pinto2001a, pinto2001b}. The disadvantage is that one is then restricted to using convolution operators with 'nice' Fourier symbols.

Our goal in this paper is to determine if nonlocal boundary constraints lead to different numerical results when compared to spectral methods, and to see if the answer depends on the type of integral operator being considered, i.e. thin-tailed kernels (e.g. kernels with exponential decay) and fat-tailed kernels (e.g. kernels with algebraic decay). 
The paper is organized as follows. 
In Section~\ref{s:nonlocaldiffusion}, we describe the three nonlocal boundary constraints considered in this paper and the assumptions that guarantee the well-posedness of the 'free' boundary problem. 
The proposed numerical method, which combines a quadrature scheme together with an Adams-Bashforth time stepping, is introduced in Section~\ref{s:numericalmethods}.
In Section~\ref{s:continuation}, we study the generation of pulse solution for two kernels (thin- and fat-tailed) and compare the impact of Dirichlet, Neumann, free and periodic boundary constraints. In Section~\ref{s:continuation_cvg} we study the convergence of our proposed algorithm. A short discussion giving a heuristic argument for the existence of the pulse profiles found in the simulations is presented in Section~\ref{s:existence}.
Conclusions and future investigations are discussed in Section ~\ref{s:conclusion}.


\section{Nonlocal Diffusion and Boundary Constraints Models}\label{s:nonlocaldiffusion}

In this paper we look at the nonlocal Gray-Scott model \eqref{e:GS}, where we have replaced the Laplacian operator, commonly used in the description of this system,
with an integral operator modeling nonlocal diffusion.
In particular, we focus on maps, $K$, as given in equation \eqref{e:intoperator},
with convolution kernels, $\gamma(x,y) $, satisfying the following assumption:
\begin{Hypothesis}\label{h:kernel}
The convolution kernel $\gamma(x,y) = \gamma(|x-y|) = \gamma(z)$ is a positive radially symmetric
function living in $L^1(\R) \cap L^2(\R)$ with 
\begin{enumerate}[i)]
\item $\int_\R \gamma(z) \;dz = 1$,
\item  $\int_\R z^2 \gamma(z) \;dz < \infty$.
\end{enumerate}
\end{Hypothesis}

The above hypothesis encompasses many, if not all, convolution kernels used in the biological literature to model dispersal of animals and plant seeds, see \cite{bullock2017, pueyo2008}. In particular, this assumption covers both
thin-tailed kernels, which possess moment-generating functions, and fat-tailed kernels, which do not. 
The distinction between these two types of kernels is important from the mathematical point of view, since they can produce different effects. In particular, it is known that fat-tailed kernels lead to accelerating fronts in population dynamic models, whereas thin-tailed kernels lead to waves traveling with constant speed \cite{garnier2011, henderson2018, rodriguez2018, medlock2003}.
Here, we pick an example from each set and work with,
\begin{enumerate}
\item an exponential kernel, $ \gamma(z) = \frac{\sigma}{2} \exp(- \sigma |z|)$ with $\sigma \in \R$, and
\item a polynomial kernel, $ \gamma(z) = \dfrac{2 a^3}{\pi (z^2 + a^2)^2 } $ with $a \in \R$.
\end{enumerate}
These two functions not only provide examples of thin- and fat-tailed kernels, respectively, but are also commonly used in the ecological literature \cite{bullock2017}.

Notice that by rewriting the operator as
\[ Ku = \int_\R u(y) \gamma(x,y) \; dy - u(x)  \int_\R \gamma(x,y) \; dy,  \quad x\in \Omega =[-L,L],\]
one can then interpret the first term as the in-flux of individuals from $\R$ into location $x$, while the second term represents  the out-flux from location $x$ into $\R$, see \cite{du2012, andreu2010nonlocal} for more details. Therefore, the operator $K$  describes a nonlocal form of Fick's law, which then allows one to define zero flux boundary conditions by writing 
\[ K u = 0, \qquad x \in \Omega^c.\]
In this paper we use this last formulation to define nonlocal Neumann boundary constraints. 
We also consider nonlocal Dirichlet boundary constraints,
\[ u(x) = g(x), \qquad x \in \Omega^c,\]
where the function $g$ is given, and a form of 'free' boundary constraints, where the decay of the function at infinity is prescribed, i.e.
\[ u(x) = \frac{|L|^q}{|x|^q} u(\pm L), \qquad x \in \Omega^c,\]
for some $q>0$, and where the values of $u(\pm L)$ are determined by the algorithm. The reason for this last choice of boundary condition comes from our interest in approximating 
the whole real line problem, i.e. $\Omega = \R$.
In the next subsection, we provide the necessary background material to justify this formulation. 
More details on how we numerically implement these constraints is given in Section \ref{s:numericalmethods}.

\subsection{Properties of the Nonlocal Operator $K$}

In this subsection we make the following more restrictive assumption on the convolution operator $K$ defined in \eqref{e:intoperator}.

\begin{Hypothesis}\label{h:kernel_analytic}
Let $K$ be as in \eqref{e:intoperator} and consider $\eta_0>0$. We assume that the Fourier symbol, $\hat{K}(\xi)$ is analytic on a strip of the complex plane $\{ \xi \in \C \mid | \Im(\xi) | < \eta_0\}$. In addition, we suppose $\hat{K}(\xi)$ is radially symmetric and has one simple zero at the origin with corresponding Taylor expansion,
\[ \hat{K}(\xi) = - \alpha |\xi|^2 + \rmO(|\xi|^4) \qquad \mbox{as} \quad |\xi| \sim 0, \quad \alpha >0.\]

\end{Hypothesis}

Notice that any exponentially decaying kernel $\gamma(z)$ satisfying Hypothesis \ref{h:kernel} generates an operator $K$ satisfying the above assumption.

Our goal is to use the above hypothesis to prove a Fredholm alternative for the operator $K$ when the physical domain $\Omega$ is assumed to be $\R$. This is done by picking suitable weighted Sobolev spaces to define the domain and range of the operator. These spaces in turn provide us with decay properties for the unknown, which we can then use to approximate the solution for large values of $x$.

First recall that an operator is Fredholm if its range is closed and if it has a finite dimensional kernel (nullspace) and co-kernel. What is relevant for us is that while $K: L^2(\R) \longrightarrow L^2(\R)$ does not have a closed range, due to having a zero eigenvalue embedded on its essential spectrum, we can recover Fredholm properties for this operator  if its domain is given instead by spaces that impose some level of algebraic growth or decay at infinity. 
More precisely, we need to use Kondratiev spaces, $M^{s,p}_\gamma(\R)$, and weighted Sobolev spaces, $W^{s,p}_\gamma(\R)$, which we define next.

\begin{Definition}[Kondratiev Spaces]
Let $s$ be a non-negative integer, $p \in (1,\infty)$ and $\gamma \in \R$. We then let $M^{s,p}_\gamma(\R)$ denote the space of locally summable, $s$ times weakly differentiable real-valued functions endowed with the norm,
\[ \| u\|_{M^{s,p}_\gamma(\R)} = \sum_{j=0}^s \| \partial^j_x u\|_{L^p_{j+\gamma}(\R)} \quad \mbox{where} 
\quad 
\| u\|_{L^p_\gamma(\R)} = \| (1+|x|^2)^{\gamma/2} u\|_{L^p(\R)}.
\]
\end{Definition}

\begin{Definition}[Weighed Sobolev Spaces]
Let $s$ be a non-negative integer, $p \in (1,\infty)$ and $\gamma \in \R$. We then let $W^{s,p}_\gamma(\R)$ denote the space of locally summable, $s$ times weakly differentiable real-valued functions endowed with the norm,
\[ \| u\|_{W^{s,p}_\gamma(\R)} = \sum_{j=0}^s \| \partial^j_x u\|_{L^p_{\gamma}(\R)} \quad \mbox{where} 
\quad 
\| u\|_{L^p_\gamma(\R)} = \| (1+|x|^2)^{\gamma/2} u\|_{L^p(\R)}.
\]
\end{Definition}

Notice that the above definitions can be extended to non-integer values $s$ by interpolation and
to negative values of $s$ by duality. 
For values of $p \in (1,\infty)$ the above spaces are also reflexive, so that $(M^{s,p}_\gamma(\R))^* = M^{-s,q}_{-\gamma}(\R)$ and  $(W^{s,p}_\gamma(\R))^* = W^{-s,q}_{-\gamma}(\R)$ with $q$ being the conjugate exponent of $p$. The pairing between a function $f \in L^p_\gamma(\R)$ and an element in the dual space $g \in L^q_{-\gamma}(\R)$ is then given by 
\[ \langle f, g \rangle = \int_\R f g \;dx.\]

Analyzing the above spaces we also find that functions in $M^{s,p}_\gamma(\R)$ decay algebraically if $\gamma>0$, whereas they may grow algebraically if $\gamma<0$. 
In addition, while  Kondratiev spaces gain a degree of algebraic localization with each derivative, the above weighted Sobolev spaces do not.  In particular, we have the following Lemma.
\begin{Lemma}\label{l:decay}
Given $\gamma>0$, a function $f \in M^{1,p}_\gamma(\R)$ satisfies $|f(x)| \leq \| f'\|_{L^p_{\gamma+1}(\R)}|x|^{1/q- (\gamma+1)}$ as $|x| \to \infty$.
\end{Lemma}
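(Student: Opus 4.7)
The plan is to use the fundamental theorem of calculus to represent $f(x)$ as a tail integral of $f'$, and then apply Hölder's inequality with the weight split between the two factors so that the weighted part of $f'$ is controlled by the $M^{1,p}_\gamma$ norm and the remaining weight integral produces the claimed algebraic decay rate.

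Before I can write $f(x) = -\int_x^\infty f'(y)\,dy$, I first need to argue that $f(x) \to 0$ as $x \to \infty$ (and similarly as $x \to -\infty$). Since $f \in M^{1,p}_\gamma(\R)$, the derivative $f'$ lies in $L^p_{\gamma+1}(\R)$. An application of Hölder's inequality with exponents $p$ and $q$ shows
\[
\int_0^\infty |f'(y)|\,dy \leq \|f'\|_{L^p_{\gamma+1}(\R)} \left(\int_0^\infty (1+y^2)^{-(\gamma+1)q/2}\,dy\right)^{1/q},
\]
and the right-hand integral is finite because $(\gamma+1)q > 1$ when $\gamma > 0$. Hence $f'$ is integrable on $(0,\infty)$, which makes $f$ absolutely continuous on $[0,\infty)$ with a finite limit at $+\infty$. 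Because $f \in L^p_\gamma(\R)$ forces $f \in L^p(\R)$, this limit must be $0$; the argument on $(-\infty,0]$ is symmetric.

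With the representation in hand, I split the weight $(1+y^2)^{(\gamma+1)/2}$ between the two factors and apply Hölder with conjugate exponents $p$ and $q$:
\[
|f(x)| \leq \int_x^\infty |f'(y)|\,dy \leq \|f'\|_{L^p_{\gamma+1}(\R)} \left(\int_x^\infty (1+y^2)^{-(\gamma+1)q/2}\,dy\right)^{1/q}.
\]
Since $\gamma > 0$ ensures $(\gamma+1)q > 1$, I can bound $(1+y^2)^{-(\gamma+1)q/2} \leq y^{-(\gamma+1)q}$ for $y \geq 1$ and compute the tail integral directly, obtaining an upper bound of order $x^{1-(\gamma+1)q}$; raising to the power $1/q$ yields the asserted decay rate $|x|^{1/q-(\gamma+1)}$ as $x \to +\infty$. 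Repeating the argument with $f(x) = \int_{-\infty}^x f'(y)\,dy$ handles $x \to -\infty$.

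The mildly delicate point is really the first step: combining the weighted $L^p$ information on $f$ and on $f'$ to conclude that $f$ vanishes at infinity. Once this is in place, everything else is a one-line Hölder estimate and an explicit tail computation. The exponent condition $(\gamma+1)q > 1$ needed both for integrability of $f'$ at infinity and for convergence of the weight integral is automatically satisfied from $\gamma > 0$, so no further hypothesis is required.
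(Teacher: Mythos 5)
Your proof is correct and follows essentially the same route as the paper's: write $f(x)$ as a tail integral of $f'$, split the weight $(1+y^2)^{(\gamma+1)/2}$ between the two factors, and apply H\"older's inequality to get the decay rate $|x|^{1/q-(\gamma+1)}$. The only difference is that you spell out the preliminary step that $f$ vanishes at infinity (which the paper compresses into ``Because $\gamma>0$ we may write\dots''), a worthwhile bit of added care but not a different argument.
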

\begin{proof}
Because $\gamma >0$ we may write $|f(x)| < \int_\infty^x | f'(y) (1+ y^2)^{(\gamma+1)/2} | \cdot |(1+ y^2)^{-(\gamma+1)/2} | \;dy.$ The result then follows by applying H{\"o}lder's inequality.
\end{proof}

The above spaces, together with Hypothesis \ref{h:kernel}, then lead to Theorem \ref{t:fredholm} given below. This result is proved rigorously in \cite{jaramillo2019}, where more general operators defined over $L^2(\R,Y)$, with $Y$ a separable Hilbert space which commutes with the actions of translations, are considered. Here we re-state the results of \cite{jaramillo2019} for the case when $Y=\R$.

\begin{Theorem}\label{t:fredholm}
Let $p \in (1,\infty)$ with $q$ its conjugate exponent, and let $\gamma \in \R$ be such that
$ \gamma + 2 + 1/p \notin \{ 1,2 \}$. Suppose as well that the convolution operator $K:M^{2,p}_\gamma(\R) \longrightarrow W^{\ell,p}_{\gamma+2}(\R)$ satisfies Hypothesis \ref{h:kernel_analytic}. Then, with appropriate value of the integer $\ell$, this operator is Fredholm and
\begin{itemize}
\item for $\gamma< 1- 2 - 1/p$ it is surjective with kernel (null-space) spanned by $\mathbb{P}_2$;
\item for $\gamma> -1+ 1/q$ it is injective with co-kernel spanned by $\mathbb{P}_2$;
\item for $j-1-2 +1/q < \gamma< j + 1-2 - 1/p$, where $j \in \N, 1 \leq j <2$, its kernel is spanned by $\mathbb{P}_{2-j}$ and its co-kernel is spanned by $\mathbb{P}_j$.
\end{itemize}
Here $\mathbb{P}_j$ is the $j-$dimensional space of all polynomials of degree less or equal to $j$.
\end{Theorem}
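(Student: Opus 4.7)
The plan is to reduce the Fredholm analysis of $K$ to that of the standard one–dimensional Laplacian $\partial_x^2$ on Kondratiev spaces, for which Fredholm theory is classical. Hypothesis \ref{h:kernel_analytic} lets me factor the Fourier symbol as $\hat K(\xi) = -\alpha\, \xi^2\, m(\xi)$, where $m$ is analytic, bounded, and bounded away from zero on the strip $\{|\Im \xi|<\eta_0\}$: analyticity across $\xi=0$ is possible precisely because the zero of $\hat K$ at the origin is simple of order two, and boundedness at real infinity follows from $\hat\gamma(\xi)\to 0$ (so $\hat K(\xi)\to -1$ and $m(\xi)\sim 1/(\alpha\xi^2)$ for $|\xi|$ large).

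First I would show that the Fourier multiplier $M = \mathcal{F}^{-1}\, m\, \mathcal{F}$ and its inverse are bounded isomorphisms between the relevant pairs of weighted spaces $L^p_{\gamma}(\R)$, $M^{s,p}_\gamma(\R)$ and $W^{s,p}_\gamma(\R)$. With $M$ in hand, the Fourier–side identity $K = (-\alpha\,\partial_x^2)\circ M = M\circ(-\alpha\,\partial_x^2)$ transfers Fredholm properties, kernel, and cokernel from $-\alpha\,\partial_x^2$ to $K$, and fixes the appropriate target regularity index $\ell$ (it records how much smoothing the $1/(\alpha\xi^2)$ behavior of $m$ at high frequency provides).

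Second, I would invoke the Fredholm theory for $\partial_x^2$ on Kondratiev spaces, in the spirit of Nirenberg--Walker and Lockhart--McOwen. In one dimension the indicial roots at infinity are $0$ and $1$, corresponding to the harmonic polynomials $1$ and $x$, and the Fredholm index is piecewise constant in $\gamma$, jumping exactly at the values where $\gamma+2+1/p\in\{1,2\}$. This yields surjectivity with two–dimensional kernel $\mathbb{P}_2$ for $\gamma$ sufficiently negative (no polynomial lies in the dual weighted space $L^q_{-(\gamma+2)}$), injectivity with two–dimensional cokernel $\mathbb{P}_2$ for $\gamma$ sufficiently positive (no polynomial lies in $M^{2,p}_\gamma$), and in the intermediate range a kernel/cokernel that are one–dimensional polynomial spaces of complementary degree, exactly as stated.

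The main obstacle is the first step: verifying that $m$ is a good Fourier multiplier on algebraically weighted $L^p$ spaces. Algebraic weights do not commute with generic multipliers, so standard Mihlin–type results do not apply directly. The cleanest route exploits a Paley–Wiener consequence of strip analyticity, namely that the convolution kernel associated with $m$ decays exponentially; since algebraic weights are slowly varying on the scale of an exponential kernel, one can commute the weight past $M$ modulo operators that are either relatively compact or relatively bounded with small norm in the appropriate $M^{s,p}_\gamma \to W^{s,p}_\gamma$ topology. This is the technical heart of the argument in \cite{jaramillo2019}, and the excluded values $\gamma+2+1/p\in\{1,2\}$ in the statement are precisely those for which the reduced Laplacian problem hits an indicial root and loses the closed range property.
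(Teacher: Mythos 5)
The paper does not actually prove Theorem \ref{t:fredholm}: it is quoted from \cite{jaramillo2019}, and your outline follows essentially the same route as that reference --- factor the symbol through $-\alpha\xi^2$, show that the remaining multiplier acts boundedly and invertibly on the weighted spaces by exploiting the exponential localization of its convolution kernel (Paley--Wiener on the strip of Hypothesis \ref{h:kernel_analytic}), and then import the classical Kondratiev-space Fredholm theory for $\partial_x^2$, whose indicial roots produce exactly the critical weights $\gamma+2+1/p\in\{1,2\}$ and the polynomial kernels and cokernels $\mathbb{P}_j$. Two points deserve attention. First, your description of $m$ is internally inconsistent: $m$ cannot be both bounded away from zero on the strip and asymptotic to $1/(\alpha\xi^2)$ at real infinity. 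The clean fix is to factor $\hat K(\xi) = \frac{-\alpha\xi^2}{1+\xi^2}\,\tilde m(\xi)$ with $\tilde m$ analytic and bounded above and below on the strip, so that the associated operator $\widetilde M$ is an isomorphism of a fixed weighted space and the gain of two derivatives is carried entirely by the explicit factor $-\alpha\,\partial_x^2(1-\partial_x^2)^{-1}$; this also makes the choice of $\ell$ transparent. Second, the step you yourself identify as the technical heart --- that an exponentially localized convolution commutes with algebraic weights up to relatively bounded or compact errors, uniformly over the admissible range of $\gamma$, so that $\widetilde M$ is invertible on $M^{s,p}_\gamma(\R)$ and $W^{s,p}_\gamma(\R)$ and not merely on $L^p(\R)$ --- is asserted but not carried out. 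As written, the proposal is a correct skeleton consistent with the strategy of \cite{jaramillo2019} rather than a complete proof, which mirrors the paper's own decision to cite rather than reprove the result.
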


The above theorem leads to the following corollary, which shows conditions under which the integral operator $K$ satisfies a Fredholm alternative. In addition, given the equation $Ku =f$, the corollary also lets us infer decay rates at infinity for the solution $u$ based on similar knowledge of the source term $f$. This last result then allows us to approximate the whole real line problem ($\Omega = \R$) by assuming a-priori the rate of decay of $u$, and reformulating this information as a  'free' boundary constraints.

\begin{Corollary}\label{t:decay}
Let $s \in \Z \cup [2, \infty)$ and $p \in (1,\infty)$ with $q$ its conjugate exponent. Consider the convolution
operator $K$ defined by Hypothesis \ref{h:kernel_analytic} 
Then, letting
\[\begin{array}{c c c}
K: M^{s,p}_\gamma(\R) & \longrightarrow & W^{s,p}_{\gamma+2}(\R)\\
u & \longmapsto & K u,
\end{array}\]
with $\gamma > -1 + 1/q$, the equation $K u = f$ has a unique solution with $|u(x)|< C|x|^{1-1/p - (\gamma+1)}$ for large $|x|$, provided the right hand side $f \in  W^{s,p}_{\gamma+2}(\R)$ satisfies,
\[ \langle f, 1 \rangle =0 \qquad \langle f, x \rangle =0.\]

\end{Corollary}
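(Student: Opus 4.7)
The plan is to combine Theorem \ref{t:fredholm} (Fredholm alternative) with Lemma \ref{l:decay} (pointwise decay for Kondratiev functions). Since $1/p + 1/q = 1$, the stated hypothesis $\gamma > -1 + 1/q$ is equivalent to $\gamma > -1/p$, which places us squarely in the second regime of Theorem \ref{t:fredholm}: the operator $K: M^{s,p}_\gamma(\R) \to W^{s,p}_{\gamma+2}(\R)$ is injective and its co-kernel is spanned by $\mathbb{P}_2 = \mathrm{span}\{1, x\}$. Uniqueness of the solution to $Ku = f$ is then immediate, while solvability follows from the Fredholm alternative provided $f$ annihilates the co-kernel under the $L^p_{\gamma+2}$--$L^q_{-\gamma-2}$ duality pairing, i.e.\ provided $\langle f, 1 \rangle = \langle f, x \rangle = 0$, which are exactly the two conditions imposed in the statement.

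Before invoking the alternative, I would verify that the co-kernel representatives $1$ and $x$ actually lie in the dual space $L^q_{-\gamma-2}(\R)$, so that the pairing is well defined. The constant function $1$ lies in this space iff $(\gamma+2)q > 1$, which is automatic when $\gamma > -1/p$. For $x$, the corresponding integral converges iff $(\gamma+1)q > 1$, i.e.\ $\gamma > -1/p$, which is precisely the borderline condition appearing in the hypothesis. This confirms that the two orthogonality conditions truly constitute the complete solvability criterion and no further moment conditions are needed.

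For the decay estimate, the unique solution $u$ belongs to $M^{s,p}_\gamma(\R) \hookrightarrow M^{1,p}_\gamma(\R)$ for any admissible $s$, and Lemma \ref{l:decay} yields
\[ |u(x)| \leq \|u'\|_{L^p_{\gamma+1}(\R)}\, |x|^{1/q - (\gamma+1)} \qquad \text{as } |x|\to\infty. \]
Rewriting $1/q = 1 - 1/p$ gives precisely the claimed exponent $1 - 1/p - (\gamma+1)$. The constant $C$ can then be taken proportional to $\|f\|_{W^{s,p}_{\gamma+2}(\R)}$ via the bounded inverse theorem applied to $K$ restricted to the closed subspace carved out by the two moment conditions.

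The main obstacle is that Theorem \ref{t:fredholm} is stated for the fixed regularity $s = 2$, whereas the Corollary requires $s \in \Z \cup [2,\infty)$. Bridging this gap amounts to a bootstrap argument using Hypothesis \ref{h:kernel_analytic}: away from the origin, $\hat{K}$ is bounded and analytic, so multiplication by $\hat{K}^{-1}$ preserves all higher Sobolev regularity; near the origin the Taylor expansion $\hat{K}(\xi) = -\alpha|\xi|^2 + \rmO(|\xi|^4)$ controls the Fredholm indices in exactly the same way as in the $s=2$ base case, and the algebraic weight bookkeeping is identical. A secondary technicality is that Lemma \ref{l:decay}, as stated, assumes $\gamma > 0$, while the Corollary allows $\gamma > -1/p$; however, the H\"older step in the lemma's proof only requires $(\gamma+1)q > 1$, which is exactly $\gamma > -1/p$, so the same pointwise bound extends to the full range covered by the Corollary.
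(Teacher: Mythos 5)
Your proposal is correct and follows exactly the route the paper intends (the paper states the corollary without an explicit proof, presenting it as a direct consequence of the injective regime of Theorem~\ref{t:fredholm} together with Lemma~\ref{l:decay}): Fredholm alternative plus annihilation of the co-kernel $\mathrm{span}\{1,x\}$ for existence and uniqueness, then the Kondratiev embedding and Lemma~\ref{l:decay} for the pointwise bound with exponent $1/q-(\gamma+1)=1-1/p-(\gamma+1)$. Your added checks --- that $1$ and $x$ lie in $L^q_{-\gamma-2}(\R)$ precisely when $\gamma>-1/p$, and that the H\"older step in Lemma~\ref{l:decay} only needs $(\gamma+1)q>1$ rather than $\gamma>0$ --- are correct and in fact patch small gaps the paper leaves implicit.
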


{\bf Remark:} Notice that because integral operators generated using fat-tail kernels do not have analytic Fourier symbols, they do not satisfy Hypothesis \ref{h:kernel_analytic}. Therefore, in principle,  the results presented in this subsection do not apply to these maps. Nonetheless, as shown in Section \ref{s:continuation}, the numerical scheme using the 'free' boundary condition does converge for both types of kernels.


\section{Numerical Methods}\label{s:numericalmethods}
In this section, the numerical scheme used to approximate solutions of the nonlocal Gray-Scott model \eqref{e:GS} is described in more detail. 
First, we apply the method of lines to rewrite our problem as a system of ODEs and present the algorithm used to perform the time evolution of these equations.  We then summarize the quadrature method developed in \cite{jaramillo2021}, which we use here to evaluate the operator $K$, and present
the resulting numerical schemes for the three different boundary constraints considered in this paper. Finally, we show that the proposed methods converge using the method of manufactured solutions.

\subsection{Method of Lines}
By first discretizing the problem in space, we can view the resulting equations as a system of ODEs.
Here we use a second-order Adams-Bashforth scheme to do the time stepping. Denoting by $dt>0$ a given time step and by $g^n$ the approximation of a function $g$ at time $t_n= n dt$, our method leads to the following time marching algorithm.

Given $u^n, v^n, u^{n-1}, v^{n-1}$, find $u^{n+1}$ and $v^{n+1}$ solution of
\begin{align*}
u^{n+1}\mathbbm{1}_\Omega = u^n \mathbbm{1}_\Omega  & + \frac{3}{2} dt \left[ d_u \; K u^n + u^n(v^n)^2 \mathbbm{1}_\Omega - f(1-u^n) \mathbbm{1}_\Omega \right]\\
& - \frac{1}{2} dt \left[ d_u \; K u^{n-1} + u^{n-1}(v^{n-1})^2\mathbbm{1}_\Omega - f(1-u^{n-1}) \mathbbm{1}_\Omega\right],
\end{align*}
and
\begin{align*}
v^{n+1}\mathbbm{1}_\Omega = v^n \mathbbm{1}_\Omega  & + \frac{3}{2} dt \left[ d_v \; K v^n - u^n(v^n)^2 \mathbbm{1}_\Omega - (f+\kappa) v^n \mathbbm{1}_\Omega \right]\\
& - \frac{1}{2} dt \left[ d_v \; K v^{n-1} - u^{n-1}(v^{n-1})^2\mathbbm{1}_\Omega - (f+\kappa) v^{n-1} \mathbbm{1}_\Omega\right],
\end{align*}
where $\Omega = [-L,L]$ and $\mathbbm{1}_\Omega$ denotes the indicator function. Notice that the notation emphasizes the fact that while the equations are posed on the domain $\Omega$, the operator $K$ uses information about the unknowns on all of $\R$. We also note that the above algorithm is fully explicit, which leads to a decoupled formulation in the sense that $u^{n+1}$ and $v^{n+1}$ can be computed in sequential order.

In the next subsection we give a brief overview of how the spatial discretization of the operator $K$ changes depending on the boundary constraints being used. We will see that the implementation of nonlocal Dirichlet and `free' boundary constraints is very similar, since in both cases we have explicit information about the unknowns, $u$ and $v$, in $\Omega^c$. This is not the case for the Neumann problem, where we assume that $K u =0$ and $K v =0$ on $\Omega^c$. In this case, as when considering $\Omega = \R$, we do not have an explicit description of the unknowns on $\Omega^c$.  As in \cite{jaramillo2021}, we get around this issue by imposing $K u =0$ and $K v=0 $ only on a bounded outer domain $\Omega_o = [-2L,-L] \cup [L,2L]$, and by then approximating $u$ and $v$ in $\R \setminus \Omega \cup \Omega_o$ using `free' boundary conditions.

\subsection{Quadrature Method} We describe the spatial discretization of the operator $K$ for three types of boundary constraints (Dirichlet, Neumann, 'free') using a quadrature method. Notice first that the operator $- K$ has the equivalent formulation,
\[ - Ku = \int_\R (u(x) - u(x-y) ) \gamma(|y|)\;dy, \quad x \in \Omega. \]
Introducing the nodes $x_i = i h$ with $i \in \Z \cap[\-L,L]$ and the mesh size $h>0$, we may then write
\[ -[Ku ](x_i) = \int_\R (u(x_i) - u(x_i-y) ) \gamma(y) \;dy, \qquad x_i \in \Omega.  \]
To discretize the operator,  we split the integral  into four terms, 
\begin{align*}
 -[K \ast u](x_i) =  & \underbrace{ \int_{-h}^h (u(x_i) - u(x_i-y) ) \gamma(y) \;dy}_{I}  \\[2ex]
 + & \underbrace{ \int_{h\leq |y|\leq L_W} (u(x_i) - u(x_i-y) ) \gamma(y) \;dy}_{II}  \\[2ex]
  + & \underbrace{ u(x_i) \int_{|y|>L_W} \gamma(y) \;dy }_{III} \quad - \quad \underbrace{ \int_{|y|>L_W} u(x_i - y) \gamma(y) \;dy}_{IV},
 \end{align*}
where $L_W= 2L$.

As shown below, Integral $I$ can be calculated in the same way for all three types of boundary constraints, while the value of integral $III$ can always be determined explicitly. The discretization scheme  for the remaining integrals, Integral $II$ and $IV$, depends on the type of boundary constraints used. Notice in particular that Integral $IV$ only samples the unknown $u$  outside the domain of interest, $\Omega$. Indeed,  a short calculation shows that for all $|y|>L_W$ the points $(x_i-y)$ are in $ \Omega^c = \R \setminus \Omega$. 
We now describe how Integrals $I,II$ and $IV$ are handled.

{\bf Integral $I$:}
To discretize the Integral $I$, we follow the approach taken in \cite{huang2014} where the same quadrature method is used to approximate the fractional Laplacian operator, $(-\Delta)^{\alpha/2}$ with $0<\alpha< 2$. In this case, the kernel $\gamma(x,y) \sim \frac{1}{|x-y|^{1+ \alpha}}$, is singular near the origin, so that the value of the integral near this point needs to be interpreted as a Principal Value.  This leads to
\begin{align*}
I = & \int_{-h}^h (u(x_i) - u(x_i-y) ) \gamma(y) \;dy \\
 = & \lim_{\eps \to 0} \int_{\eps}^h (u(x_i) - u(x_i-y) ) \gamma(y) \;dy + \int_{-h}^{-\eps} (u(x_i) - u(x_i-y) ) \gamma(y) \;dy\\
  = & \int_0^h  [ 2u(x_i) - u(x_i + y) - u(x_i -y) ] \gamma(y) \;dy\\
  = & \int_0^h  [ u''(x_i) y^2 + \rmO(y^4)]  \gamma(y) \;dy.
 \end{align*}
where the last line comes from Taylor expanding each element in the integrand about the point $x_i$. Using central difference and integrating, we then arrive at
 \begin{align}\label{e:I}
 \begin{split}
 I& = -[ u(x_i +h) -2u(x_i) +u(x_i-h)] \frac{1}{h^2} f_1(h) + \rmO(h^2)\\
  & = \Big [ u(x_i) - u(x_{i-1}) \Big ] f_1(h) + \Big[ u(x_i) - u(x_{i-1}) \Big] f_1(h) + \rmO(h^2),
  \end{split}
  \end{align}
 with $$f_1(h) = \int_0^h y^2 \gamma(y) \;dy.$$
 
While we do not consider singular kernels in this manuscript, we retain this formulation for convenience.

 {\bf Integral II:} To discretize the second integral, we first approximate its integrand using polynomial interpolation. To this end, we let $T_h$ denote the standard tent function given by
\begin{equation}\label{e:hat}
 T_h(t) = \left \{ 
\begin{array}{c c}
1- |t|/h & \mbox{for } |t|\leq h,\\
0 & \mbox{otherwise}.
\end{array}
\right.
\end{equation}
Then, for any smooth function $f: \R \longrightarrow \R$,
we let $Pf$ represent the linear interpolant of $f$. That is,
\[  Pf(y) = \sum_{j \in \Z} f(x_j) T_h(y - x_j).\]
With this formulation, and letting $f(y) = (u(x_i) - u(x_i-y)) $, we have that
\begin{align*}
 II = &\int_{h< |y|< L_W} f(y) \;dy \\
 = & \int_{h< |y|< L_W} [P f(y)] \gamma (y) \;dy + \rmO(h^2)\\
 = & \int_{h< |y|< L_W} \sum_{j \in \Z} f(x_j) T(y - x_j)\gamma(y) \;dy + \rmO(h^2)\\
= &  \sum_{j \in \Z} f(x_j) \int_{h< |y|< L_W}T(y - x_j) \gamma(y)\;dy + \rmO(h^2).
\end{align*}
Thus, we get
\begin{equation}\label{e:II}
II =  \sum_{j \in \Z } (u(x_i) - u(x_i - x_j)) \underbrace{\int_{h< |y| <L_W} T(y-x_j) \gamma(y)\;dy}_{w_j} + \rmO(h^2)
\end{equation}
We will see later in this subsection that for the kernels considered in this paper, the weights $w_j$ depends on the type of boundary constraints considered and can be calculated explicitly, see equation \eqref{eq:def_wj}.

{\bf Integral $IV$:}
In what follows we  briefly describe the three types of boundary constraints  being considered, and address how the integral $IV$ is approximated in each case.
 
{\it Nonlocal Dirichlet:} In the case of nonlocal Dirichlet boundary constraints, 
we assume that
 \[ u(x) = g(x), \quad \mbox{ for all} \quad |x|>L,\]
for some function $g(x)$ that is given. We can then calculate Integral $IV$ explicitly, either numerically or if possible analytically,  via
\begin{equation}\label{e:IV_D}
 IV =  \int_{|y|>L_W} g(x_i - y) \gamma(y) \;dy .
 \end{equation}

{\it Free Boundary:} 
For the whole real line problem, i.e. when $\Omega = \R$, the unknown $u$ is approximated on $\Omega^c$ by an algebraically
decaying function, where the best guess for the decay can be derived based on the problem under consideration. That is, we let
\begin{equation}\label{e:neum}
u(x_i) = \frac{u(\pm L)}{g( \pm L)} g(x_i), \quad \mbox{for all} \quad |x_i|>L,
\end{equation}
where in this case $g$ is a given function sharing the same decay, or growth rate, as $u$.
For instance, we may impose $u(x) \sim g(x) = \frac{1}{|x|^q}$ for some $q \in \R$.  As mentioned in Section \ref{s:nonlocaldiffusion}, this assumption follows from Theorem \ref{t:fredholm}, its Corollary \ref{t:decay}, and the hypothesis placed on the operator $K$. However, notice that while we impose the decay/growth conditions on $u$ in $\Omega^c$, we do not prescribe the value of $u$ at the end points $x = \pm L$. As a result, Integral $IV$ can be calculated using
\begin{equation}\label{e:IV_RL}
IV = \frac{u(\pm L)}{g(\pm L)} \int_{|y|>L_W} g(x_i - y) \gamma(y) \;dy.
\end{equation}

\begin{Remark}
    
In Section \ref{s:continuation} we run simulations of the nonlocal Gray-Scott model using an algebraically decaying convolution kernel $\gamma(x,y)$ to define the operator $K$, see \eqref{e:intoperator}. 
Despite this kernel not satisfying the hypothesis of Theorem \ref{t:fredholm}, we are able to numerically show that the method converges.
\end{Remark}

{\it Nonlocal Neumann:}
As in \cite{jaramillo2021} we treat the Neumann problem as a special case of the whole real line problem, i.e. when $\Omega = \R$.
More precisely, if the physical domain is given by $\tilde{\Omega} = [-\ell, \ell]$, we let $L = 2\ell$ and then define the computational domain of $K$
as $\Omega = [-L,L]$. This means that while, for example, the equation \eqref{e:GS} is satisfied for $x_i \in \tilde{\Omega} = [-\ell, \ell]$, the value of $Ku(x_i)$  is approximated for all points $x_i \in  [-L,L]$. In particular, we have that $Ku(x_i) =0$ for $x_i \in \tilde{\Omega}_o =[-2\ell, -\ell] \cup [\ell, 2\ell]$. Then to discretize $Ku$  in $\Omega$, as in the case of the whole real line problem, we assume that $u$ has the same decay properties on $\Omega^c$ as some function $g(x) \sim 1/|x|^q$, for some $q\in \R$, and  calculate the integral $IV$
 using equation \eqref{e:IV_RL}.

{\bf Resulting Numerical Scheme.} The discretizations given in equations \eqref{e:I}, \eqref{e:II}, and \eqref{e:IV_D} or \eqref{e:IV_RL}, yield the following numerical scheme to approximate the operator $-K$
\[ -[Ku](x_i) \sim \sum_{-M}^M (u(x_i) - u(x_i - x_j) ] w_j + C u_i + D_i,\]
where $ M$ is an even number satisfying $L = \frac{M}{2}h$, for some mesh size $h$. Here, the terms $C$ and $D_i$ are given by
\begin{align*}
C =  &\int_{|y|>L_W} \gamma(y) \;dy, \\[2ex]
 D_i  = &  \left\{ 
\begin{array}{c l}
  \int_{|y|>L_W} g(x_i - y) \gamma(y) \;dy &\quad \mbox{Dirichlet Case,}\\[4ex]
  \dfrac{u(\pm L)}{g(\pm L)} \int_{|y|>L_W} g(x_i - y) \gamma(y) \;dy  & \quad \mbox{Real Line and Neumann Case},
  \end{array}
  \right.
 \end{align*}
while the weights $w_j$ satisfy
 \[ w_j = \left \{
 \begin{array}{c c c}
 0 & \mbox{for} & j = 0,\\[2ex]
  f_1(h) + \int_{|y|\geq h} T_h(y-x_j) \gamma(y) \;dy & \mbox{for} & j =  \pm 1,\\[2ex]
  \int_{|y|\geq h} T_h(y-x_j) \gamma(y) \;dy & \mbox{for} & 1< |j| \leq M.
 \end{array}
 \right.\]
Notice that when $j =0$ we have $u(x_i) = u(x_i - x_j)$, and thus we can define the weights $w_0$ arbitrarily. 
On the other hand, the weights $w_{\pm 1}$ are determined by integrating functions whose support intersects the interval $[-h,h]$. They  are therefore the sum of integrals $I$ and $II$, the former being represented here by the term $f_1(h)$.
Because the kernel $\gamma(y)$ is symmetric, we can conclude that $w_j = w_{-j}$, and arrive at the following table
\begin{equation}\label{eq:def_wj}
w_j = \left \{
 \begin{array}{c c c}
 0 & \mbox{for} & j = 0,\\[2ex]
  f_1(h) - F'(x_1) +\dfrac{1}{h} [F(x_2) - F(x_1)] & \mbox{for} & |j| =   1,\\[2ex]
 \dfrac{1}{h}[ F(x_{j+1} - 2F(x_j) + F(x_{j-1})] & \mbox{for} & 1< |j| < M,\\[2ex]
  F'(x_M) +\dfrac{1}{h} [F(x_{M-1}) - F(x_M)] & \mbox{for} & |j| =   M.\\
 \end{array}
 \right.
\end{equation}
with $F$ satisfying $F''=\gamma$ and
where we use the Lemma \ref{l:weights}, proved in  \cite{huang2014} and shown below for completeness, to calculate explicitly the weights $w_j$.
\begin{Lemma}\label{l:weights}
Let $F(t)$ be a $C^2([-2h,2h])$ function and let $T_h(t)$ be as in \eqref{e:hat}. Then 
\[ \int_{-h}^h T_h(t) F''(t) \;dt = \frac{1}{h} [ F(h) - 2F(0) + F(-h) ],\]
\[ \int_{0}^h T_h(t) F''(t) \;dt = - F'(0) + \frac{1}{h} [  F(h) - F(0) ].\]
\end{Lemma}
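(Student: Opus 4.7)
The proof proceeds by integration by parts on the subintervals where the hat function $T_h$ is smooth. Recall that $T_h$ is piecewise linear with a corner at the origin, satisfying $T_h(\pm h) = 0$, $T_h(0) = 1$, $T_h'(t) = 1/h$ on $(-h,0)$, and $T_h'(t) = -1/h$ on $(0,h)$. Because of this corner, I would first split the integral at the origin and treat each half separately, relying on the $C^2$ regularity of $F$ to ensure that the boundary contributions at $t=0$ coming from the two subintervals are unambiguous.

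For the second identity, a single integration by parts on $[0,h]$ suffices: the boundary term contributes $-F'(0)$ (using $T_h(h) = 0$ and $T_h(0) = 1$), while the interior term $-\int_0^h T_h'(t) F'(t) \;dt$ evaluates to $(1/h)[F(h)-F(0)]$ since $T_h'$ is constant on $(0,h)$. Summing yields the claimed formula.

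For the first identity I would carry out the analogous calculation on $[-h,0]$ and add the result to the expression just obtained on $[0,h]$. The boundary contributions at the interior endpoint cancel (the left subinterval contributes $+F'(0)$ and the right contributes $-F'(0)$), leaving exactly the symmetric second difference $(1/h)[F(h) - 2F(0) + F(-h)]$.

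The argument is essentially routine. There is no substantive obstacle; the only point requiring care is respecting the non-smoothness of $T_h$ at the origin by splitting the interval before applying integration by parts, and keeping track of the sign of $T_h'$ on each half.
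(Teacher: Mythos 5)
Your proof is correct, and it is the standard argument: the paper itself does not reproduce a proof of this lemma (it defers to the cited reference of Huang and Oberman), and the integration-by-parts computation you give, splitting at the corner of $T_h$ at the origin and using $T_h'=\pm 1/h$ on the two halves, is exactly the intended derivation. The boundary terms you track, namely $-F'(0)$ surviving on $[0,h]$ and the cancellation of $\pm F'(0)$ when the two halves are summed, are all correct.
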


We note that the above quadrature method yields an approximation of order two in space, i.e. $h^2$, of the operator $-K$. The following section describes the fully discretized algorithm for the nonlocal Gray-Scott model.

\subsection{Fully discretized algorithms}
Combining the quadrature method with the method of lines, introduced in the previous subsections leads to a system of ordinary differential equations which we solve using an explicit second-order Adams-Bashforth method. 

\begin{algorithm}
\caption{Nonlocal Dirichlet and `Free' Boundary Constraints \label{algorithm1}}
\begin{algorithmic}[1]

\STATE \textbf{Preliminary:} Number of nodes $M$,  discretized operator $K$, domain $\Omega$, 
parameters $p$, initial condition $W_0=[u_0,v_0]$, $t_0=0$, vector field $G= G(t,W,K,p)$.

\STATE \textbf{Input:}  Tolerance $tol$, time step $dt$, Max number of time steps $nmax$.
\STATE \textbf{Trial Step:} $t_1 = t_0 + dt$, \quad $W_1 = W_0 + dt*G(t_0,W_0,K)$.
\FOR{$n= 0$ \TO $nmax$}
\STATE $t_{n+1} = t_n +dt$
\STATE $W_{n+1} = W_n + \frac{3}{2} dt * G(t_n,W_n,K) - \frac{1}{2} dt * G(t_{n-1},W_{n-1},K) $
\STATE error = max $|W_{n+1}-W_n|$
\IF{error  $< tol$} 
\STATE Break
\ENDIF
\ENDFOR
\end{algorithmic}
\label{al:dirichlet}
\end{algorithm}

In the case of nonlocal Dirichlet and 'free' boundary constraints, our approach leads to the scheme presented in Algorithm \ref{algorithm1}. To be consistent with the numerical illustrations presented in Section \ref{s:continuation}, we present a version of the scheme that identifies stationary states. That is, when the difference between our approximation at two consecutive times, i.e. $W_{n+1}$ and $W_n$, becomes smaller than a given positive tolerance, the algorithm stops. The algorithm can be straightforwardly modified to approximate the solutions of the nonlocal Gray-Scott model on a given time interval $[0,T]$ by setting $nmax=T/dt$ and $tol=-1$ in the above algorithm. This strategy will be employed in the following section to verify that the resulting algorithm converges with order two.

In the case of nonlocal Neumann boundary constraints, where at each time step we need to determine the values of the unknowns in the computational domain of $K$, $\Omega = [-L,L]$, we split our algorithm into two parts.

First, given $u_n$ and $v_n$ in $\Omega$, we compute $Ku_n$ and $Kv_n$  using the quadrature method with 'free' boundary conditions to approximate the value of these functions on $\Omega^c$. This allows us to evolve the nonlocal Gray-Scott equations on the physical domain $\tilde{\Omega}= [-L/2,L/2]$, where the equations are defined, and obtain  the values of $u_{n+1},v_{n+1}$ in this region. 
Next, we determine the value of $u_{n+1}$ and $v_{n+1}$ on the outer domain $\tilde{\Omega}_o = [-L, -L/2] \cup [L/2, L]$, using the boundary constraint. That is, we consider the {\it extension} problem
\[
\begin{array}{c c}
Ku = & 0\\ 
Kv = &0 
\end{array} \qquad x \in \tilde{\Omega}_o = [-L, -L/2] \cup [L/2, L], \]
\[
\begin{array}{c c}
u = & u_n\\ 
v = &v_n 
\end{array} \qquad x \in \tilde{\Omega} =[-L/2,L/2]. \]

Generally, the above equations can be solved as follows. Letting  $u_1 = u\mid_{[-L,-L/2]}$, $u_2 = u\mid_{\tilde{\Omega}}$,  and $u_3 = u \mid_{[L/2,L]}$, we may write
\[ Ku = \begin{bmatrix} 
K_{11} & K_{12} & K_{13}\\
K_{21} & K_{22} & K_{23}\\
K_{31} & K_{32} & K_{33}
\end{bmatrix} 
\begin{bmatrix}
u_1\\ u_2 \\ u_3
\end{bmatrix} = 
\begin{bmatrix} 
0 \\ f \\ 0
\end{bmatrix},\]
where $K_{ij}$ represent the block matrices that make up our  discretized operator $K$. The vector $f$ is at this point an arbitrary vector, since as shown below it does not enter our calculations.
Indeed, given that we start with information about the value of $u$ inside the domain $\tilde{\Omega}$, the vector $u_2$ is assumed to be known. We may thus re-arrange the above expression as
\[ \underbrace{\begin{bmatrix} 
K_{11} &  K_{13}\\
K_{31}  & K_{33}
\end{bmatrix} }_{\tilde{K}}
\begin{bmatrix}
u_1 \\ u_3
\end{bmatrix} = 
\begin{bmatrix} 
- K_{12} u_2\\ -K_{32} u_2,
\end{bmatrix}\]
which can now be solved for $[ u_1, u_3] = u\mid_{\tilde{\Omega}_o}$.
The above scheme is summarized in Algorithm \ref{extend}, where we use the notation presented here.

\begin{algorithm}
\caption{Extension \label{extend}}
\begin{algorithmic}[1]

\STATE \textbf{Preliminary:} Inner domain $\Omega =[-L/2,L/2]$, outer domain $\Omega_o=[-L,-L.2] \cup [L/2,L]$.
\STATE \textbf{Input:}  Operator $K$, function $u: \Omega \longrightarrow \R$.
\STATE $\tilde{K}, K_{12},K_{32} \gets$ Restrict $[K]$
\STATE $f_u \gets [-K_{12}u_i, -K_{32} u_i] $
\STATE $[u_{o,1},u_{o,2}] \gets \tilde{K}^{-1} f_u$
\STATE $ u_e \gets [ u_{o,1}, u, u_{o,2}]$
\RETURN {$u_e$}
\end{algorithmic}
\end{algorithm}

\begin{algorithm}
\caption{Nonlocal Neumann Boundary Constraints \label{algorithm2}}
\begin{algorithmic}[1]
\STATE \textbf{Preliminary:} Number of nodes $M$,  discretized operator $K$, domain $\Omega$, 
parameters $p$, initial condition $W_0=[u_0,v_0]$, $t_0=0$, vector field $G= G(t,W,K,p)$.
\STATE \textbf{Input:}  Tolerance $tol$, time step $dt$, Max number of time steps $nmax$.
\STATE \textbf{Trial Step:} $t_1 \gets t_0 + dt$, \quad $[u_1,v_1] = W_1 \gets W_0 + dt \cdot G(t_0,W_0,K,p)$.
\STATE $u_{e} \gets$ Extension $(K, u_1)$ \quad  $v_{e} \gets$ Extension $(K, v_1)$ 
\STATE $W_{e,1} \gets [ u_e, v_e]$
\FOR{$n= 0$ \TO $nmax$}
\STATE $t_{n+1} = t_n +dt$
\STATE $[u_{n+1},v_{n+1}]=W_{n+1} \gets W_n + \frac{3}{2} dt * G(t_n,W_{e,n},K,p) - \frac{1}{2} dt * G(t_{n-1},W_{e,n-1},K,p) $
\STATE $u_{e} \gets$ Extension $(K, u_{n+1})$ \quad  $v_{e} \gets$ Extension $(K, v_{n+1})$ 
\STATE $ W_{e,n+1} \gets [u_e,v_e]$
\STATE error = max $|W_{n+1}-W_n|$
\IF{error$\leq tol$} 
\STATE Break
\ENDIF
\ENDFOR
\end{algorithmic}
\label{al:neumann}
\end{algorithm}

\subsection{Convergence Results} In this section we apply the method of manufactured solutions to verify the convergence of the proposed algorithm. Recall first that the proposed quadrature method is of order two, see \cite{jaramillo2021} for a complete proof, which means that the spatial error is of order $\rmO(h^2)$, with $h$ the mesh size. Second, we use a second-order Adams-Bashforth method, so the time error is order $\rmO(dt^2)$, with $dt$ the time step.

Since the spatial convergence of the quadrature method presented here
 has already been studied extensively in \cite{jaramillo2021}, without loss of generality,
 we can focus our numerically investigations on a problem that uses nonlocal Dirichlet boundary constraints. Unlike \cite{jaramillo2021}, where stationary problems of the form $Ku=f$ were studied, we now consider the nonlocal Gray-Scott model \eqref{e:GS}, which we supplement with additional source terms denoted by $f_u$ and $f_v$. The modified system reads:
\begin{equation*}
\begin{split}
u_t & = d_u Ku + A(1-u) - uv^2 + f_u,\\
v_t & = d_v Kv  - Bv + uv^2 + f_v.
\end{split}
\end{equation*}
To use the method of manufactured solutions, we consider the following kernel $\gamma$ and solution pair $(u,v)$:
\begin{equation} \label{eq:cvg_test_dirichlet_BC}
\left\{ 
    \begin{array}{c}
    \gamma(y) = 0.5 \exp(-|y|),\\
    u(x,t) = 0.5 \cos(t)  
    \left(1.0 + \sin(\pi(x-0.5)) \right) (1-x^2) \exp(1.0-x^2),\\
    v(x,t) = \cos(t^2) \cos(\pi x /2) x^3 \sin(\pi x).
    \end{array}
\right.
\end{equation}
The functions $u(x)$ and $v(x)$ presented above are then solutions to the nonlocal Gray-Scott model 
provided we chose appropriate source terms, $f_u$ and $f_v$. To run the test, the computational domain is set to $\Omega = [-1,1]$ and the time interval to $[0,1]$. Homogeneous Dirichlet boundary constraints are enforced on both $u$ and $v$. The system parameters are set as follows:
 $$d_u = 0.05, d_v = 0.01, A = 6, \text{ and } B = 8.$$
We perform a series of six simulations with $M\in\{40,80,160,320,640,1280\}$ quadrature nodes. 
The mesh size $h$ is defined as $h=|\Omega|/M=2/M$, and the time step $dt$ is set to $2h=4/M$. Our results are displayed in Table \ref{tab:cvg_test_dirichlet_BC} which shows the evolution of the $L^2$ errors in $u$ and $v$ when we refine the mesh size and time step. The results are consistent with a second-order algorithm. Indeed, we recover $L^2$ errors that behave in $\rmO(h^2 + dt^2)$ for both unknowns $u$ and $v$. 

We note that similar convergence studies can be performed for the Real Line and Neumann problems. We refer to Section \ref{s:continuation_cvg} for a convergence study of the proposed methods to stationary pulse solutions of the nonlocal Gray-Scott model. We also refer to \cite{jaramillo2021} for convergence tests performed on stationary problems of the form $Ku = f$ with 'free' and 'Neumann' boundary constraints.

\begin{table} 
\centering
\begin{tabular}{  m{1.7cm} m{1.8cm} m{1cm} m{1.7cm} m{1.2cm} m{1.7cm} m{1.2cm}}
\hline \hline
 dt & h & M & Error $u$ & Order $u$ & Error $v$ & Order $v$\\
\hline \hline
      0.05 & 0.025   & 40 & 8.35E-5  & -  & 4.50E-5   &  -   \\
      0.025 & 0.0125   & 80 & 2.16E-5  & 1.95  & 1.05E-5   & 2.10    \\
      0.0125 & 0.0625   & 160 & 5.47E-6  & 1.98  & 2.52E-6   & 2.06    \\
      0.00625 & 0.003125  & 320 & 1.38E-6  & 1.99  & 6.13E-7   & 2.04    \\
      0.003125 & 0.0015625  & 640 & 3.46E-7  & 2.00  & 1.51E-7   & 2.02    \\
      0.0015625 & 0.00078125   & 1280 & 8.66E-8  & 2.00  & 3.75E-8   & 2.01     \\ 
\hline 
\end{tabular}
\caption{Problem \eqref{eq:cvg_test_dirichlet_BC}: $L^2$ errors for Algorithm \ref{al:dirichlet} using nonlocal Dirichlet boundary constraints. The time step is denoted by $dt$, the mesh size by $h$ and the number of quadrature nodes by $M$.}
\label{tab:cvg_test_dirichlet_BC}
\end{table}

\section{Pulse Solutions and the Effect of Boundary Conditions}
\label{s:continuation}

In this section, we simulate the nonlocal Gray-Scott model using the three boundary constraints described in Section \ref{s:numericalmethods}, i.e. nonlocal Dirichlet, nonlocal Neumann, and `free' boundary constraints. Our goal is to understand how these boundary constraints affect the pulse pattern profiles in this system. We compare these results with numerical simulations obtained using a Fourier spectral method that assumes periodic boundary conditions. Finally, we consider the effects of expanding the computational domain.

Since our numerical experiments focus on pulse solutions, we choose parameters and initial conditions that converge to this type of patterns. That is, we pick
\[d_u = 1, \quad d_v= 0.01, \quad  A = 0.01, \quad B = (0.01)^{1/3}/2,\]
and set the initial conditions as follows
\begin{equation}\label{eq:continuation_IC}
u_0(x) = 1 - 0.67 \dfrac{1}{\alpha \sqrt{2\pi}} 
    \exp(-0.5 (\frac{x}{\alpha})^2),
\qquad \text{ and } \qquad 
v_0(x) = 0.925 \dfrac{\beta}{ \alpha 2\sqrt{2} \Gamma(\beta^{-1})}
    \exp(- (\frac{|x|}{\alpha})^\beta),
\end{equation}
where $\beta=3$ and $\alpha=0.1$.

For the nonlocal Dirichlet and the `free' boundary constraint case we use a physical/computational domain $\Omega =[-75/4,75/4]$, while in the nonlocal Neumann case we pick $\tilde{\Omega} = [-75/4 -75/4]$ and  use a computational domain $\Omega =[-75/2, 75/2]$ to calculate the nonlocal operator $K$.

As mentioned in Section \ref{s:nonlocaldiffusion}, we consider the following convolution kernels, 
\begin{enumerate}
\item  $ \gamma(z;\sigma) = \frac{\sigma}{2} \exp(- \sigma |z|)$ with $\sigma \in \R$, and
\item  $ \gamma(z;a) = \dfrac{2 a^3}{\pi (z^2 + a^2)^2 } $ with $a \in \R$,
\end{enumerate}
which then define our nonlocal operator via formula \eqref{e:intoperator}.
Notice that in the case of the exponential kernel, as the parameter $\sigma $ increases the spread of the operator decreases, see Figure \ref{fig:kernels}. Consequently, the operator behaves more and more like a Laplacian as the value of $\sigma$ grows, but regains its nonlocal character as the value of $\sigma$ decreases.
On the other hand, in the case of the algebraic kernel we find that as the value of the parameter $a$ increases, the spread of the operator increases. So we have the opposite effect, the smaller the parameter $a$ is, the more the operator $K$ behaves like the Laplacian. When running the simulations, we also vary the parameters $\sigma$ and $a,$ and track how these changes affect the profiles of pulse solutions. 

\begin{figure}[ht] 
   \centering
   \includegraphics[width=2.8in]{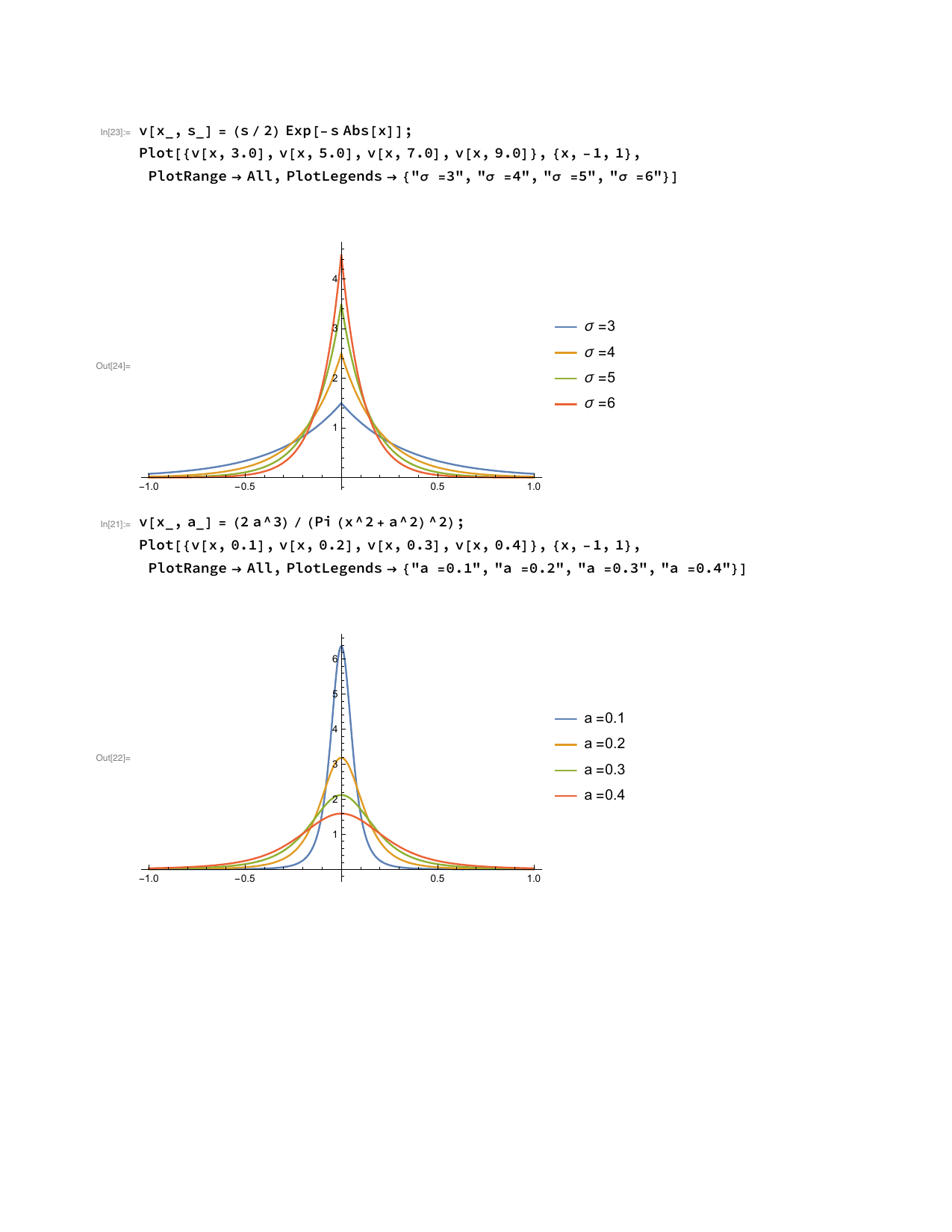} 
   \includegraphics[width=2.8in]{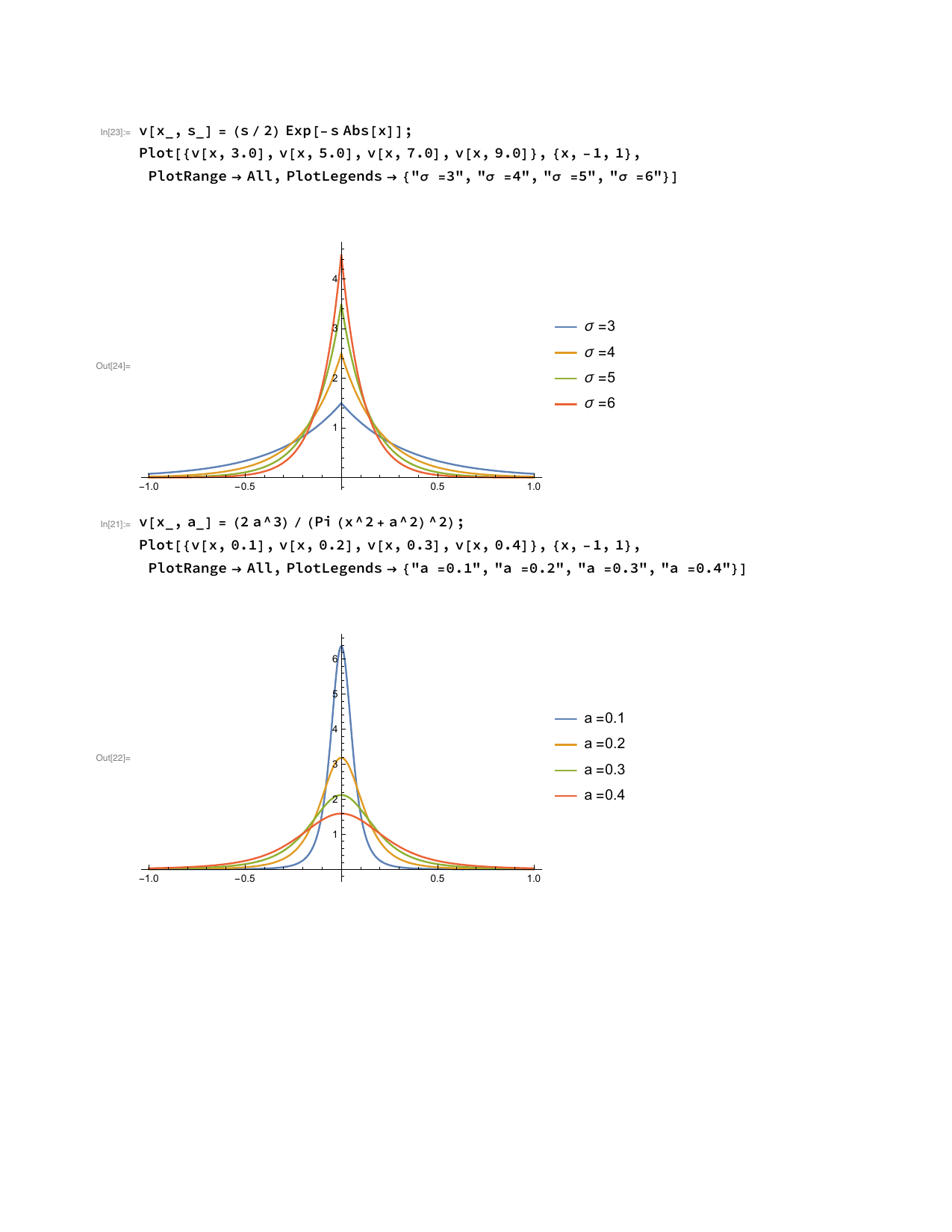}  
   \caption{Convolutions kernels. Left: exponential kernel  $\gamma(z) = \frac{\sigma}{2} \exp(- \sigma |z|)$. Right: algebraic kernel  $ \gamma(z) = \dfrac{2 a^3}{\pi (z^2 + a^2)^2 } $.}
   \label{fig:kernels}
\end{figure}

Figures \ref{fig:Exponential_Nonlocal_BC} and \ref{fig:Algebraic_Nonlocal_BC} depict pulse solutions obtained using the exponential and algebraic kernels, respectively, to define the operator $K$. Both figures also account for nonlocal Dirichlet, Neumann and 'free' boundary constraints. Overall we find that when the convolution operator $K$ is close to the Laplacian, i.e. for small values of $a$ or large values of $\sigma$, boundary effects are minimal and the resulting pulse solutions are qualitatively similar in all cases. That is, the solutions have a smooth pulse profile, although nonlocal Dirichlet boundary constraints lead to shorter pulses. 
On the other hand, when the spread of the nonlocal operator is large (i.e. small $\sigma$ values or large $a$ values), we observe that the exponential kernel leads to pulses with a \emph{mesa-profile}, while the algebraic kernel leads to a \emph{cat-ear} profile. This behavior is more prominent when using nonlocal Dirichlet boundary constraints, showing that when the kernels are wide both, the type of nonlocal boundary condition and the type of convolution kernel used (i.e. thin- or fat-tailed),  affect the profile of pulse solutions.
\begin{figure}[ht] 
   \centering
 \includegraphics[width=2.75in]{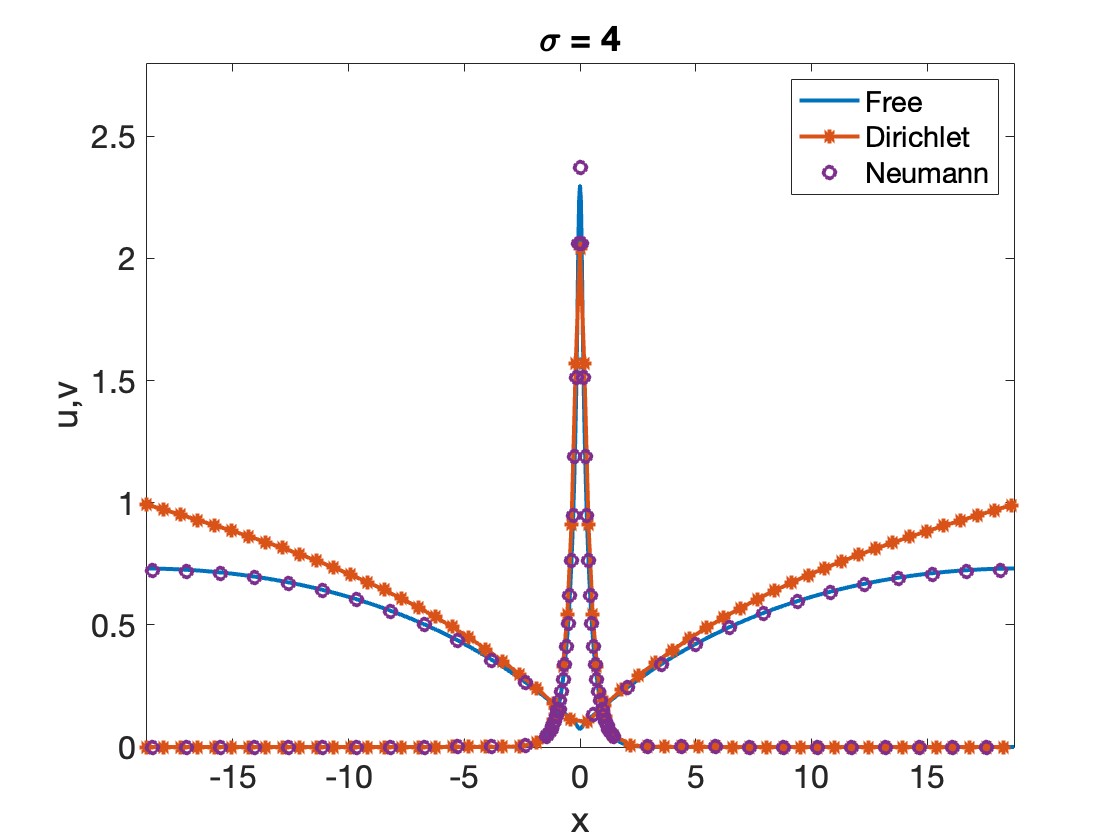}
 \includegraphics[width=2.75in]{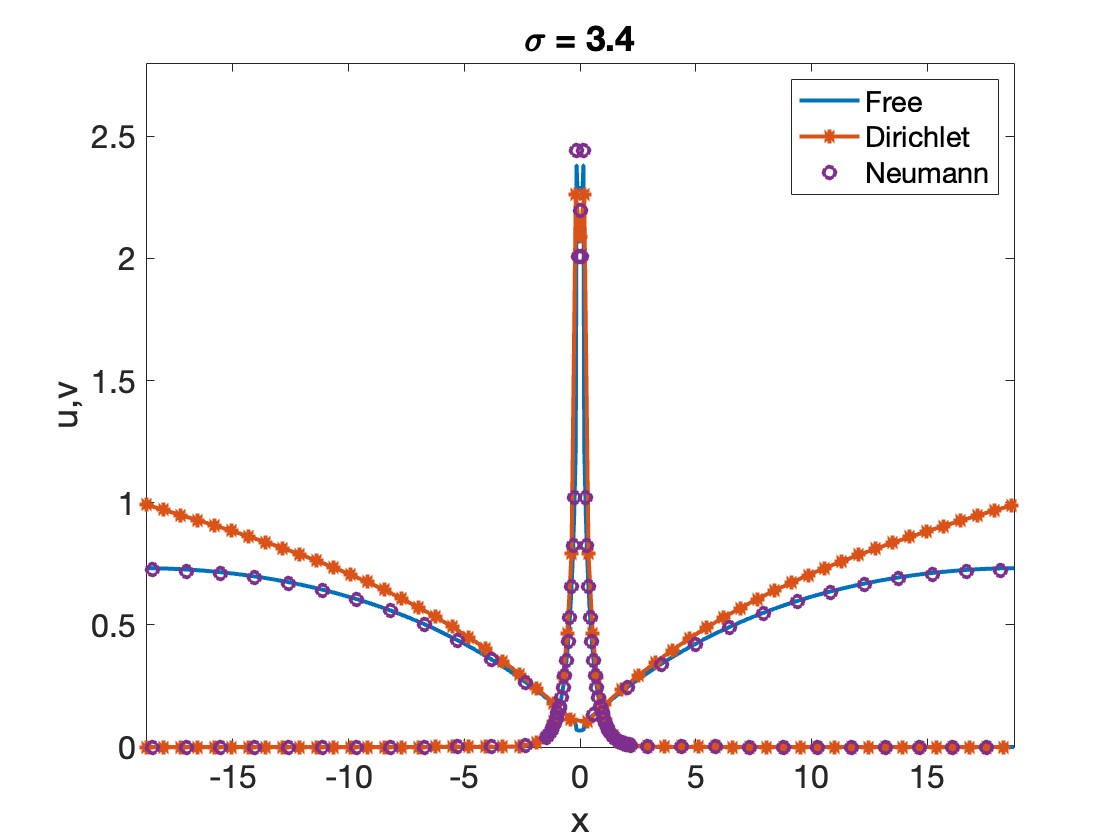}
 
\includegraphics[width=2.75in]{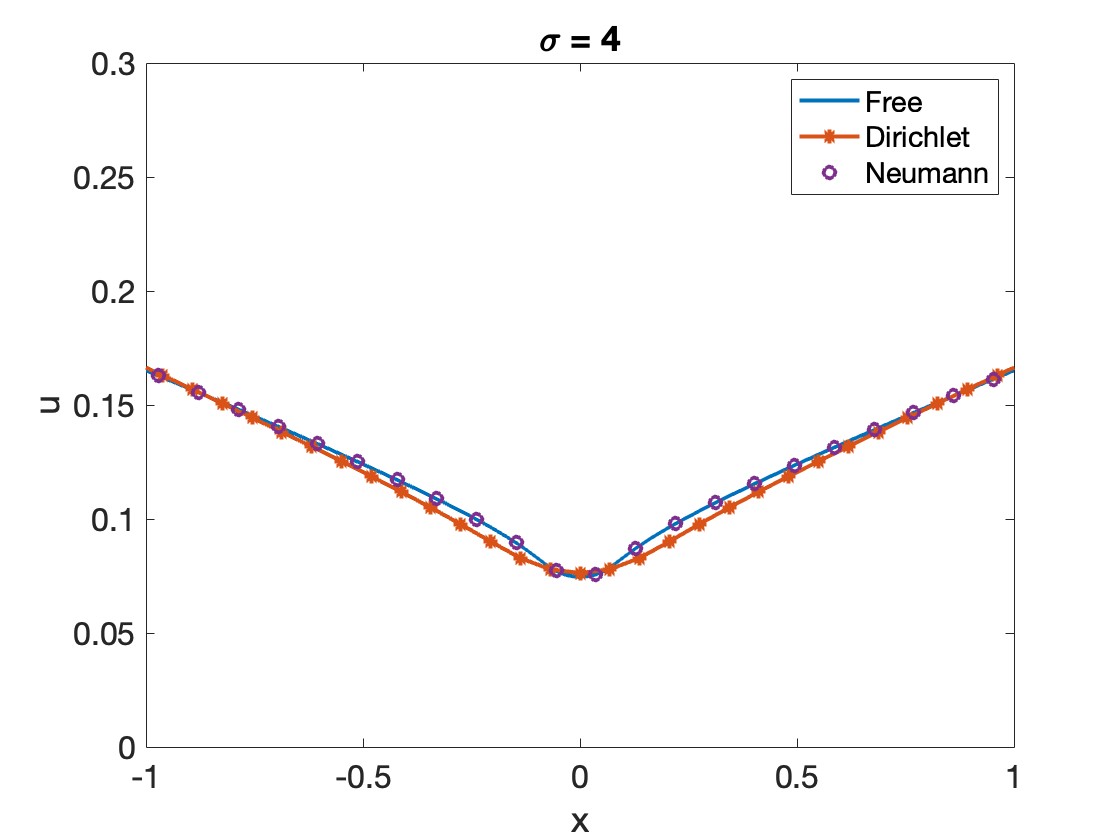}
 \includegraphics[width=2.75in]{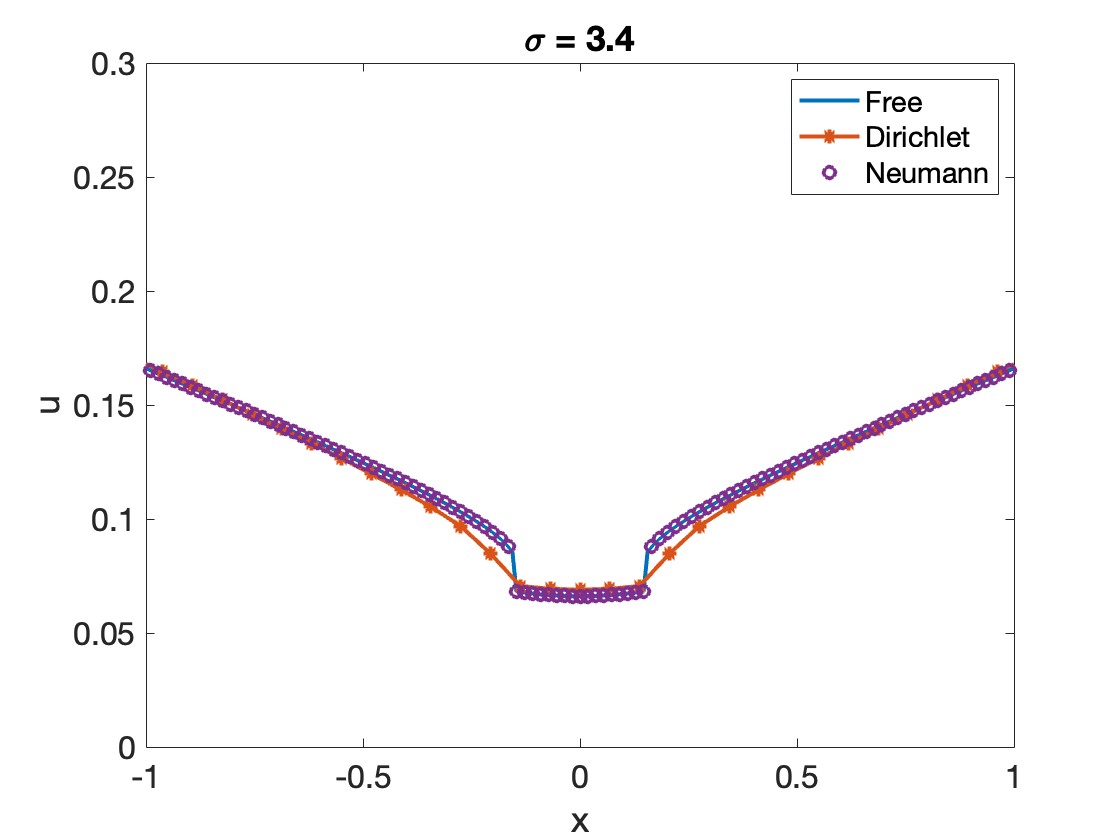}

\includegraphics[width=2.75in]{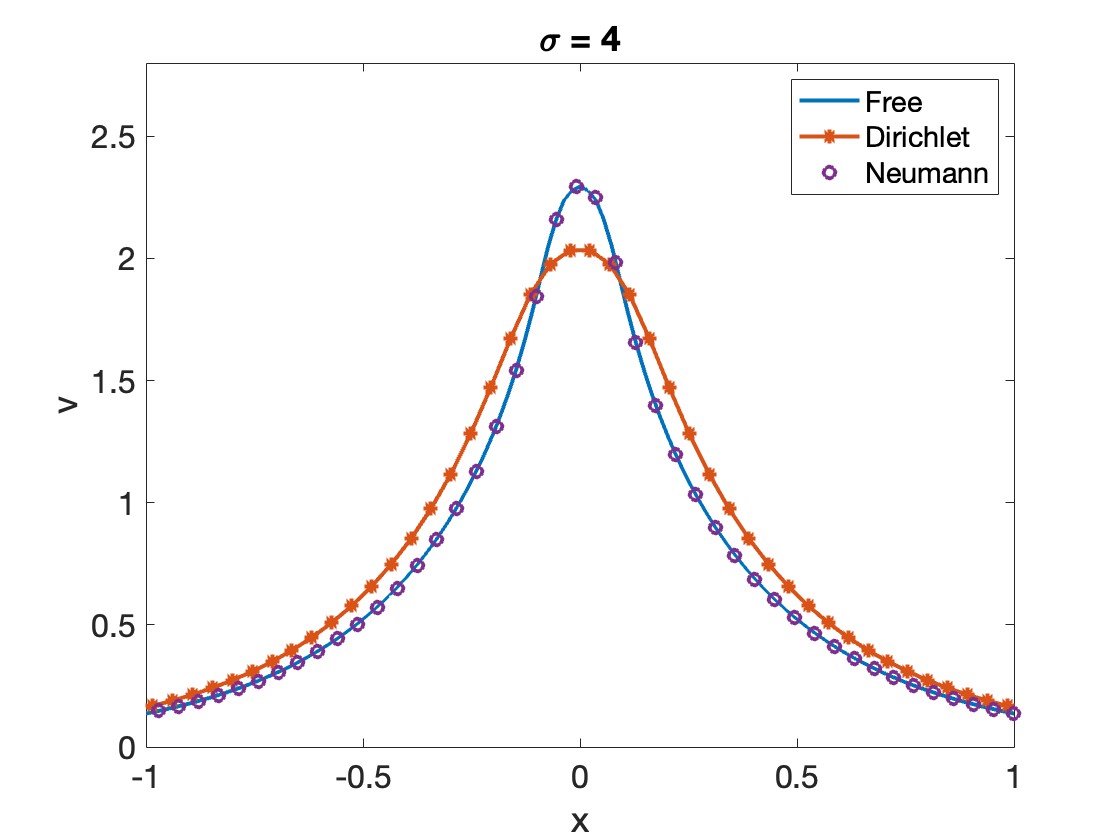}  
  \includegraphics[width=2.75in]{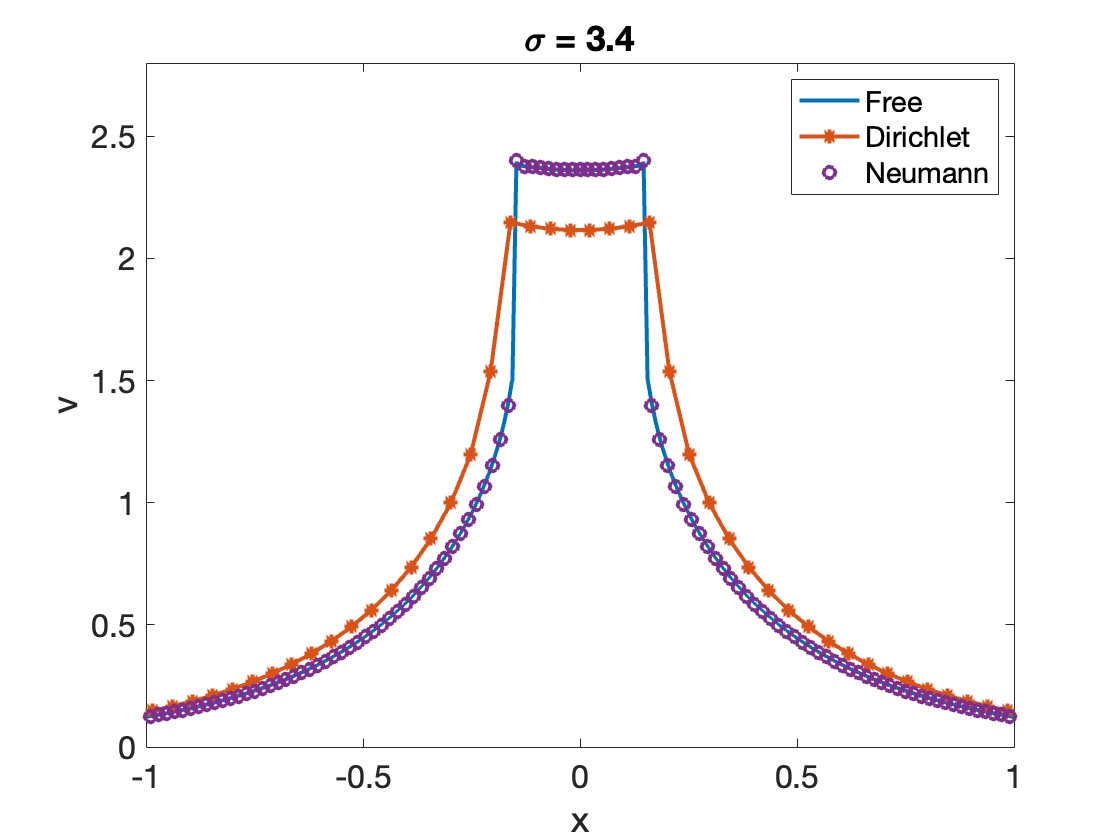}
\caption{Pulse solutions of nonlocal Gray-Scott model \eqref{e:GS} with map $K$ defined by exponential kernel with $\sigma =4$ (left column), and $\sigma =3.4$ (right column). Simulations compare all three nonlocal boundary constraints and use $M = 2^{13}$ nodes.}
\label{fig:Exponential_Nonlocal_BC}
\end{figure}
\begin{figure}[ht] 
   \centering
 \includegraphics[width=2.75in]{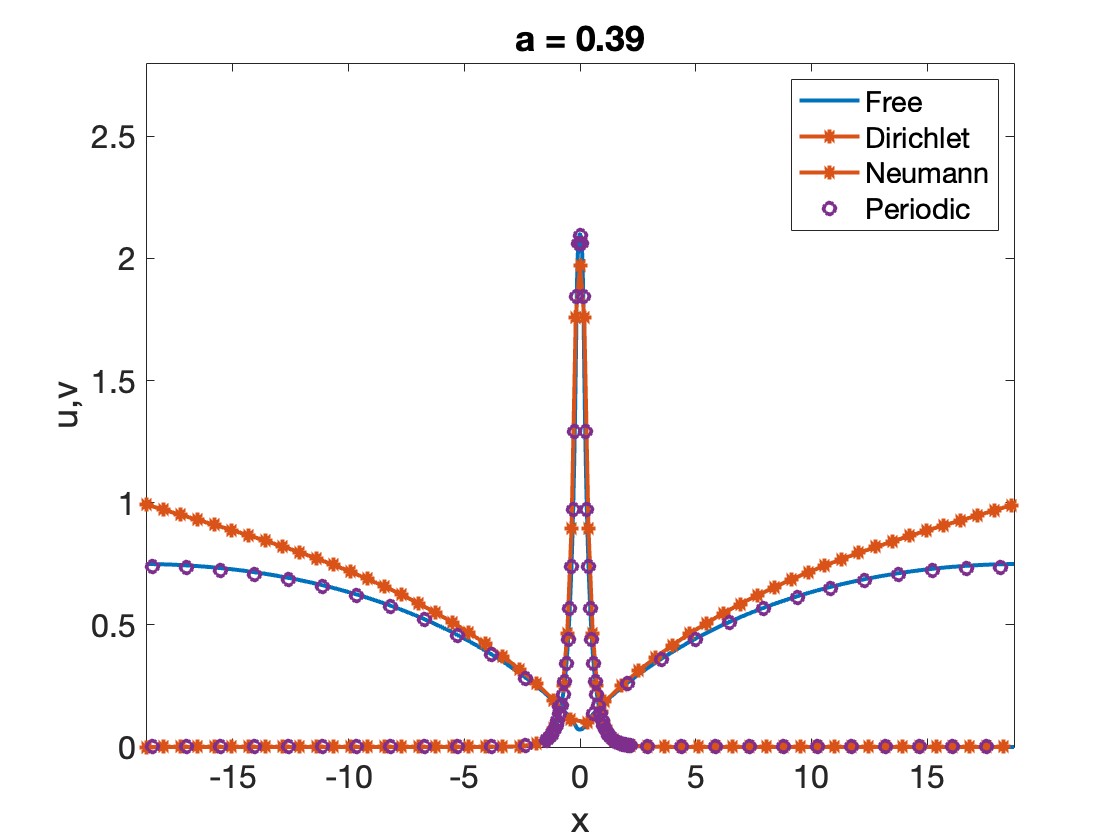}
  \includegraphics[width=2.75in]{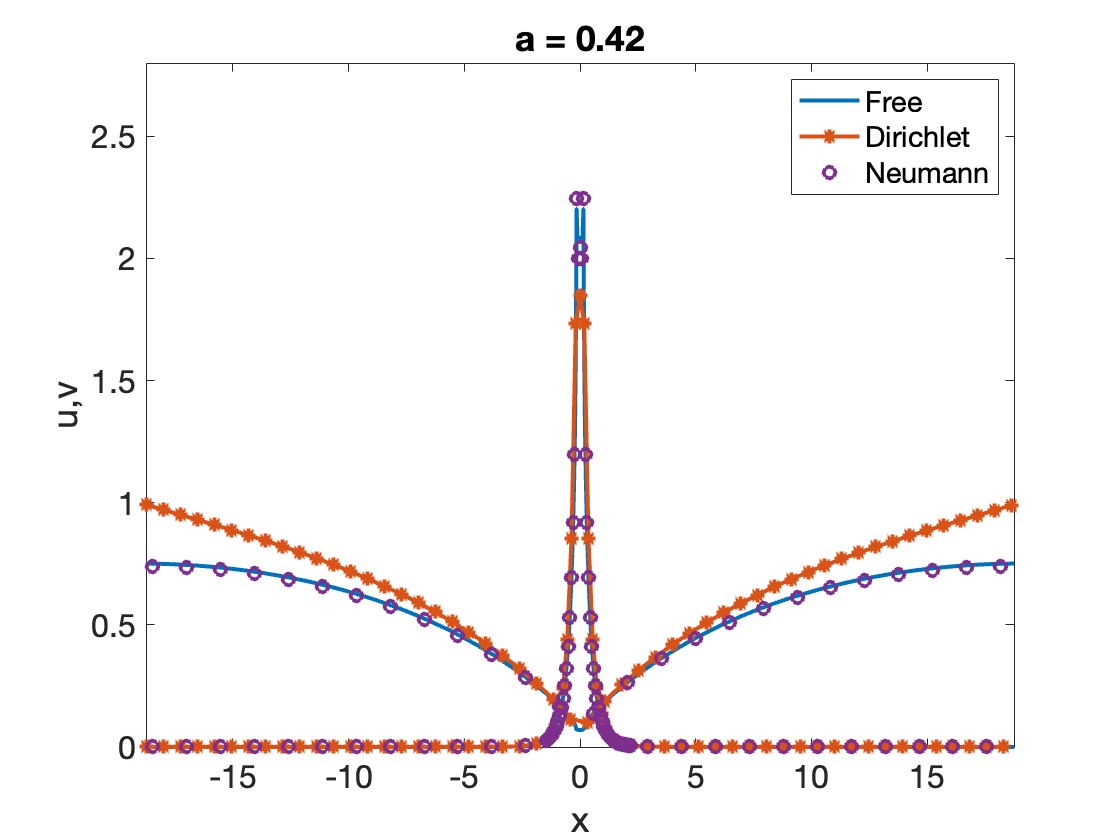}
 
 \includegraphics[width=2.75in]{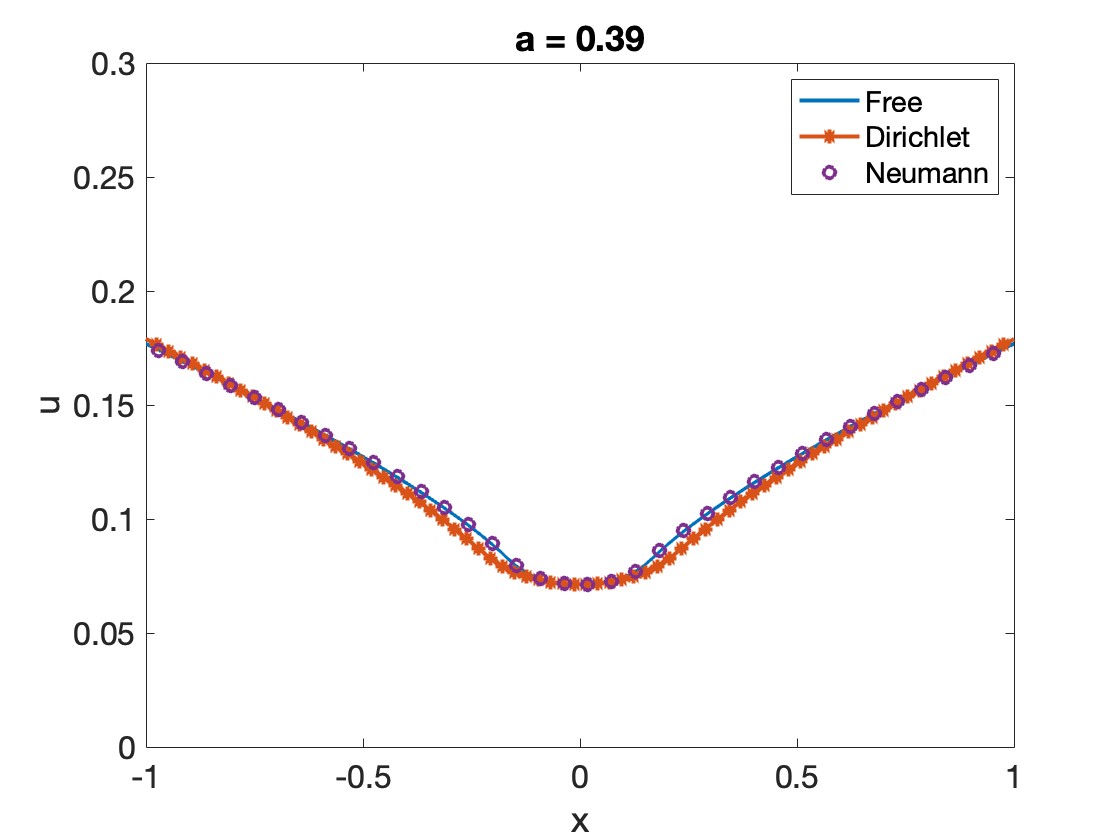}
 \includegraphics[width=2.75in]{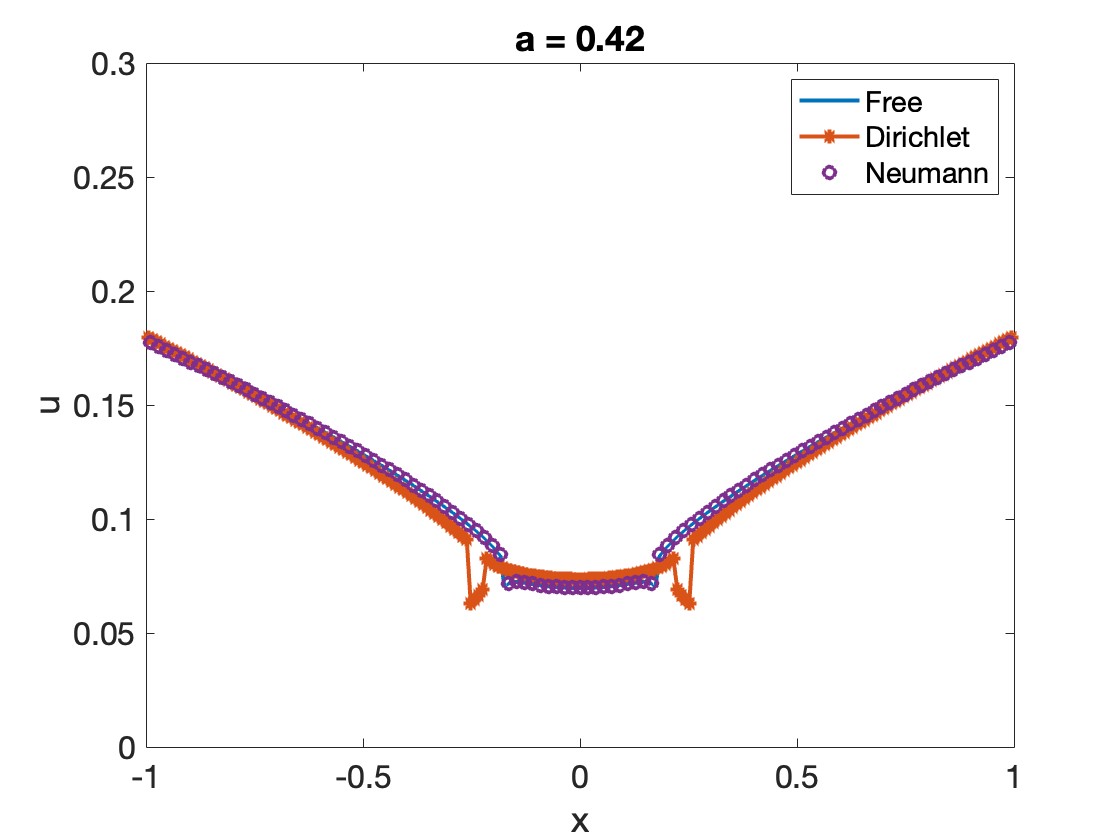}
 
 \includegraphics[width=2.75in]{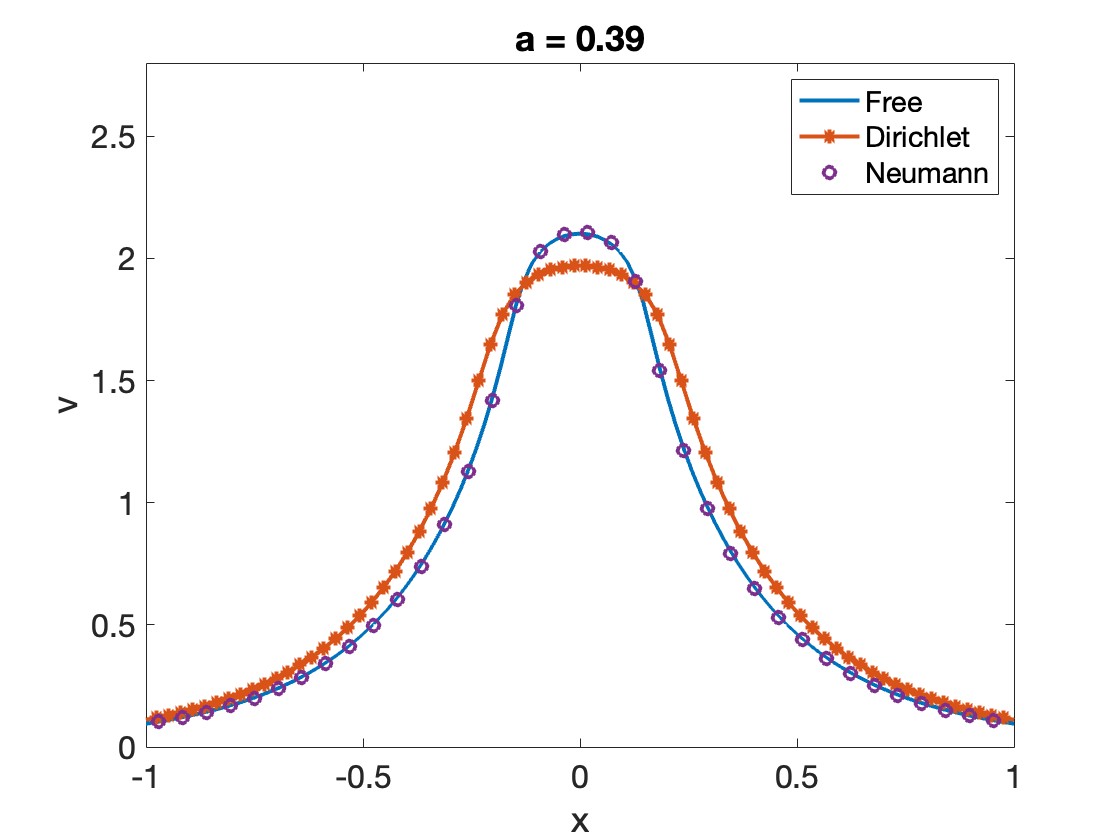}
 \includegraphics[width=2.75in]{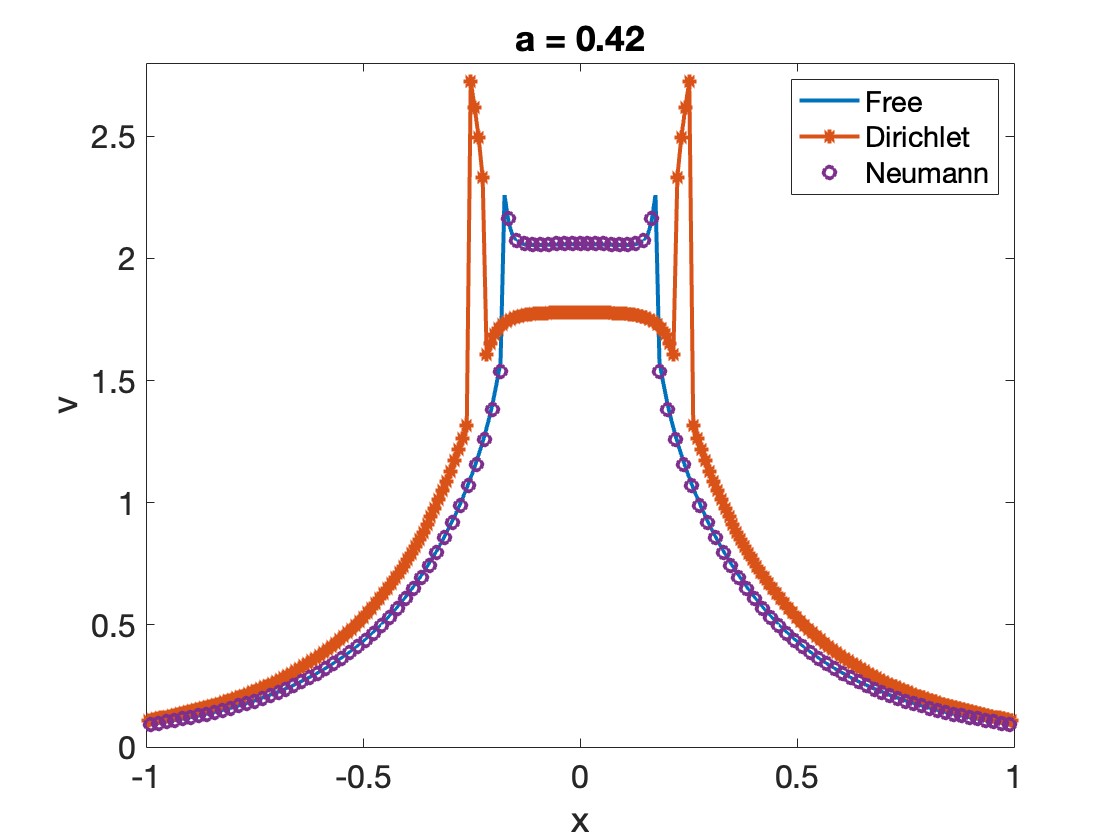}
\caption{Pulse solutions of nonlocal Gray-Scott model \eqref{e:GS} with map $K$ defined by algebraic kernel with $a =0.39$ (left column), and $a =0.42$ (right column). Simulations compare all three nonlocal boundary constraints and use $M = 2^{13}$ nodes.}
\label{fig:Algebraic_Nonlocal_BC}
\end{figure}

Figures \ref{fig:Exponential_RL_Periodic} and \ref{fig:Algebraic_RL_Periodic} compare the effect of nonlocal versus local boundary constraints by plotting pulse solutions obtained using the 'free' boundary constraints together with solutions obtained using periodic boundary conditions. In the latter case, we used the Fast Fourier Transform (FFT) to transform our convolution operator into a multiplication operator. The resulting system of ordinary differential equations is then solved using a second-order Backward Euler scheme. These figures show again that when the nonlocal operators $K$ behave like the Laplacian, boundary effects are minimal, while for wide kernels different boundary conditions lead to different results. Indeed, we find that periodic boundary conditions produce pulses with tops that oscillate, while nonlocal boundary constraints result in solutions with sharp corners, i.e. \emph{mesa-profiles} when using the exponential kernel or \emph{cat-profiles} when using the algebraic kernel. 

\begin{figure}[ht] 
   \centering
 \includegraphics[width=2.75in]{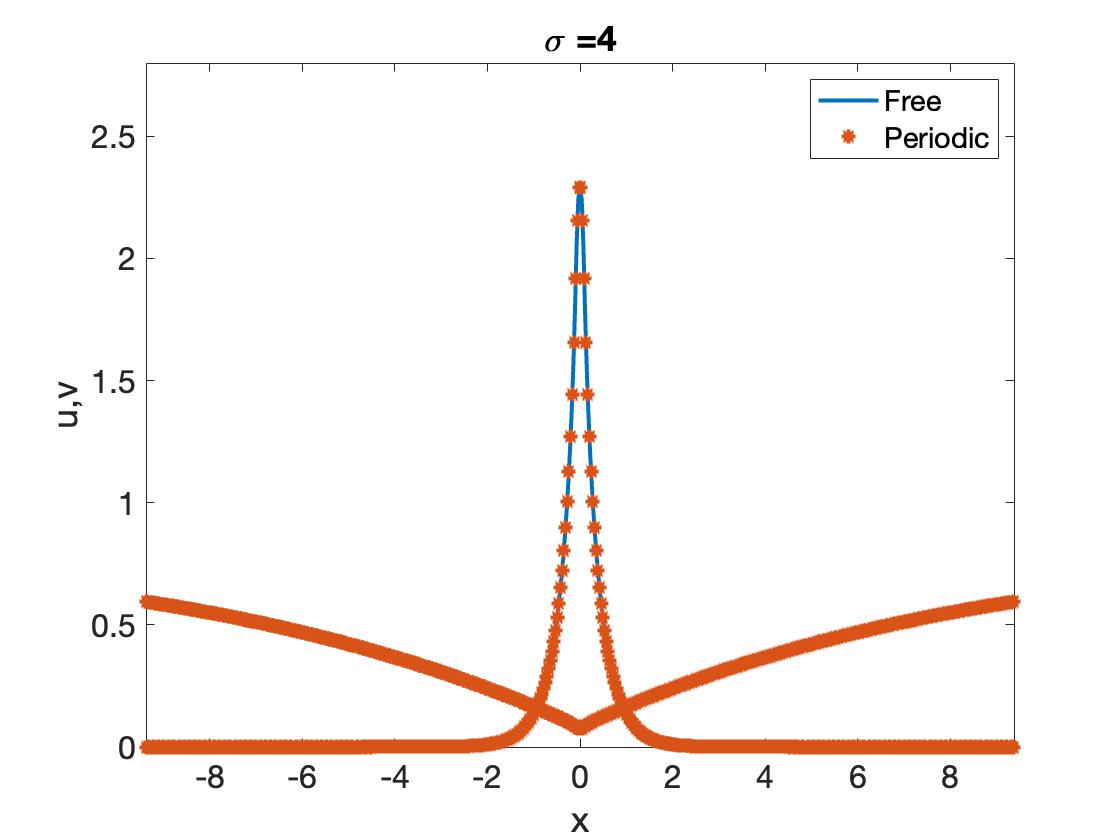}
  \includegraphics[width=2.75in]{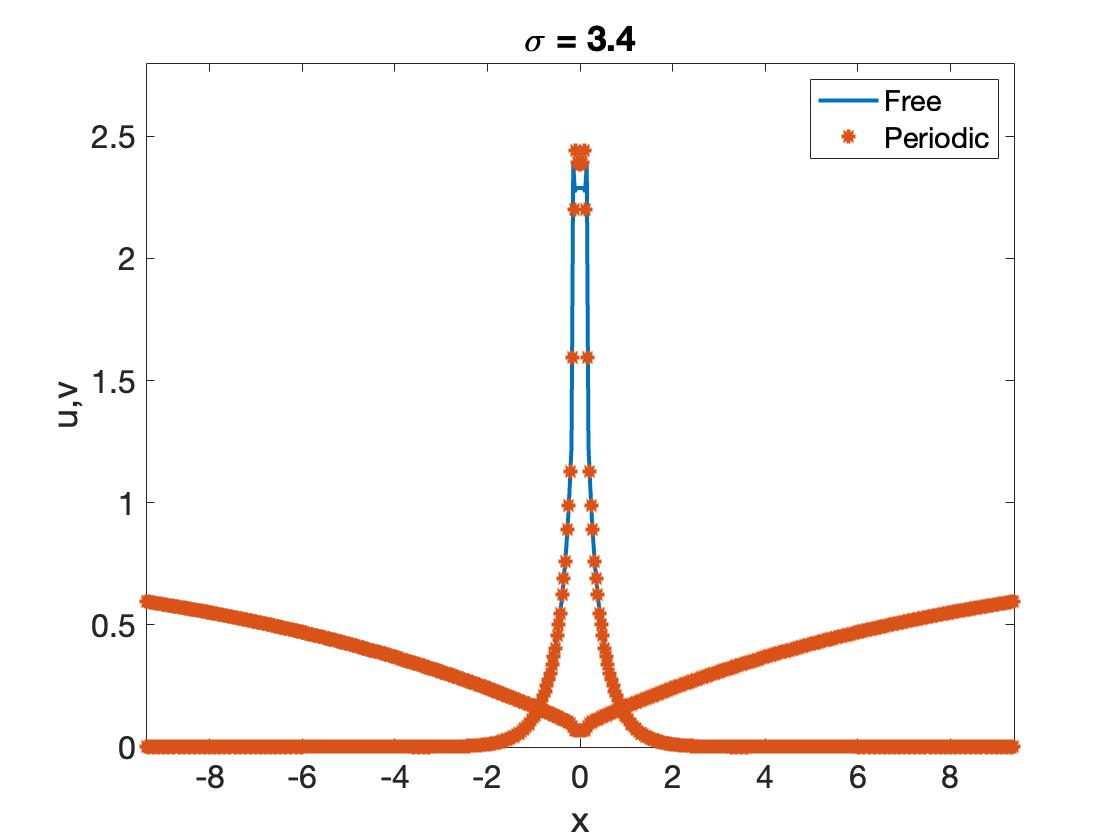}
 
\includegraphics[width=2.75in]{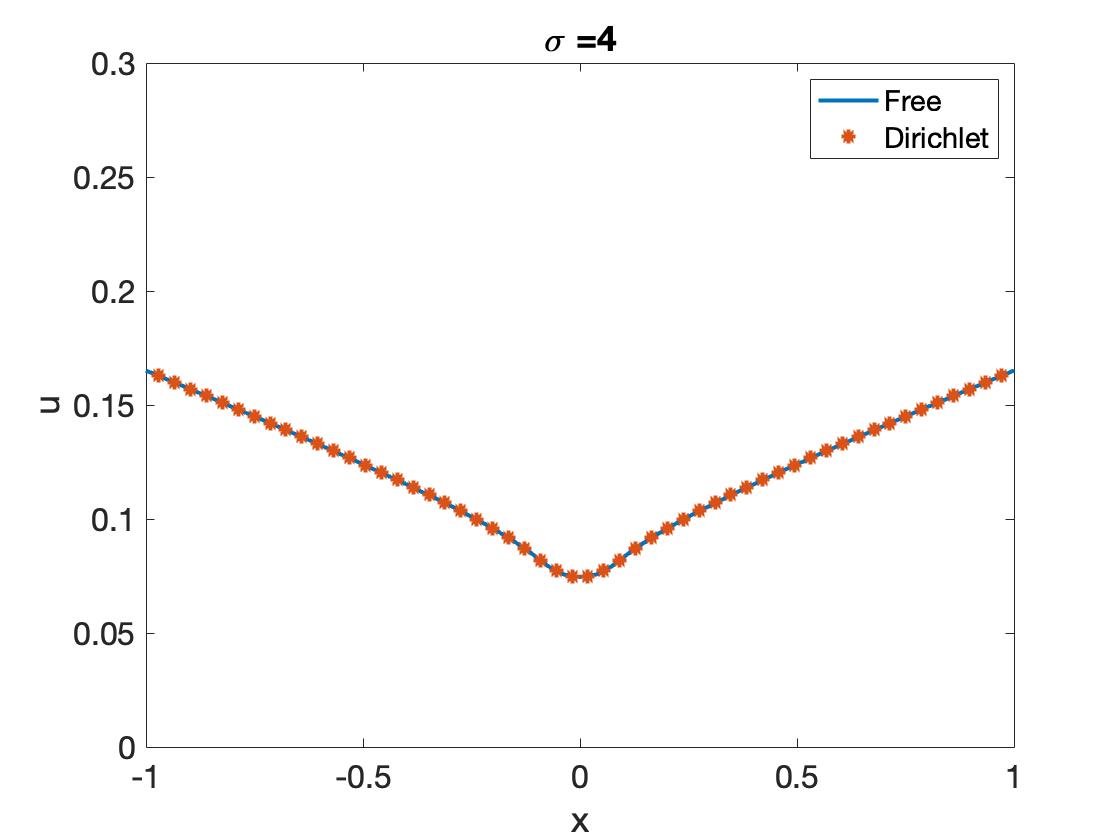}
 \includegraphics[width=2.75in]{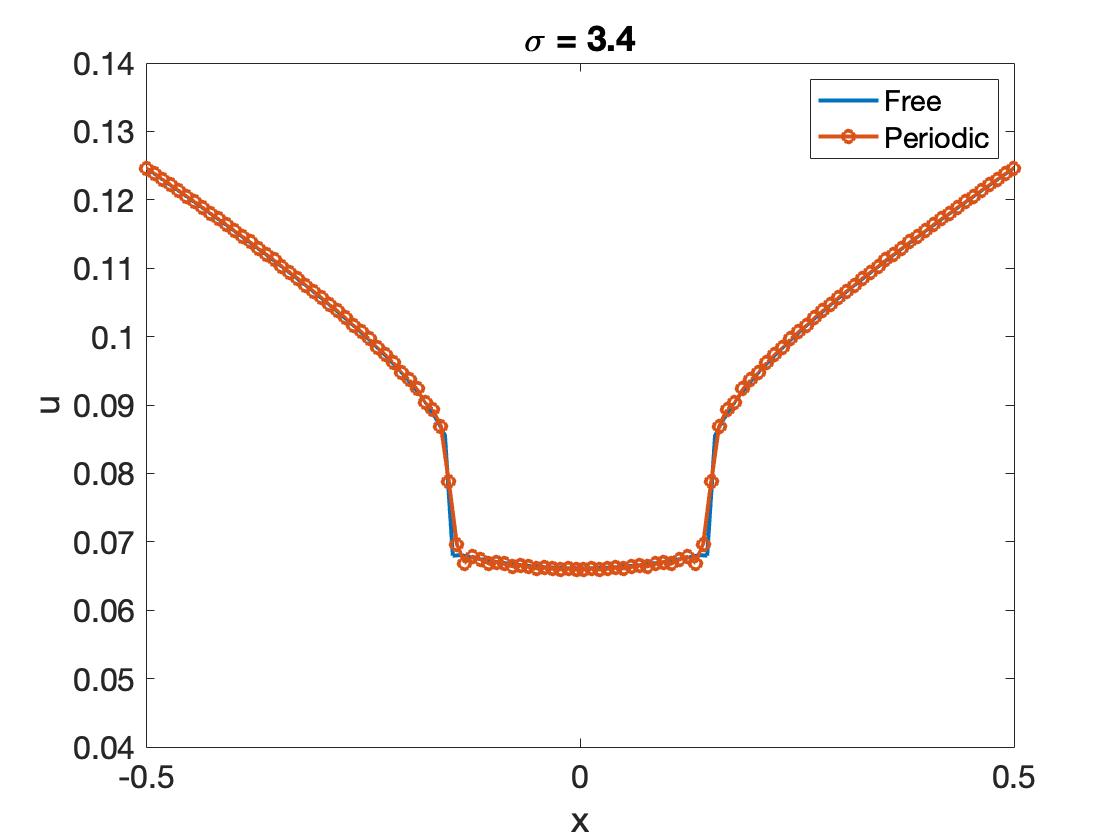}

\includegraphics[width=2.75in]{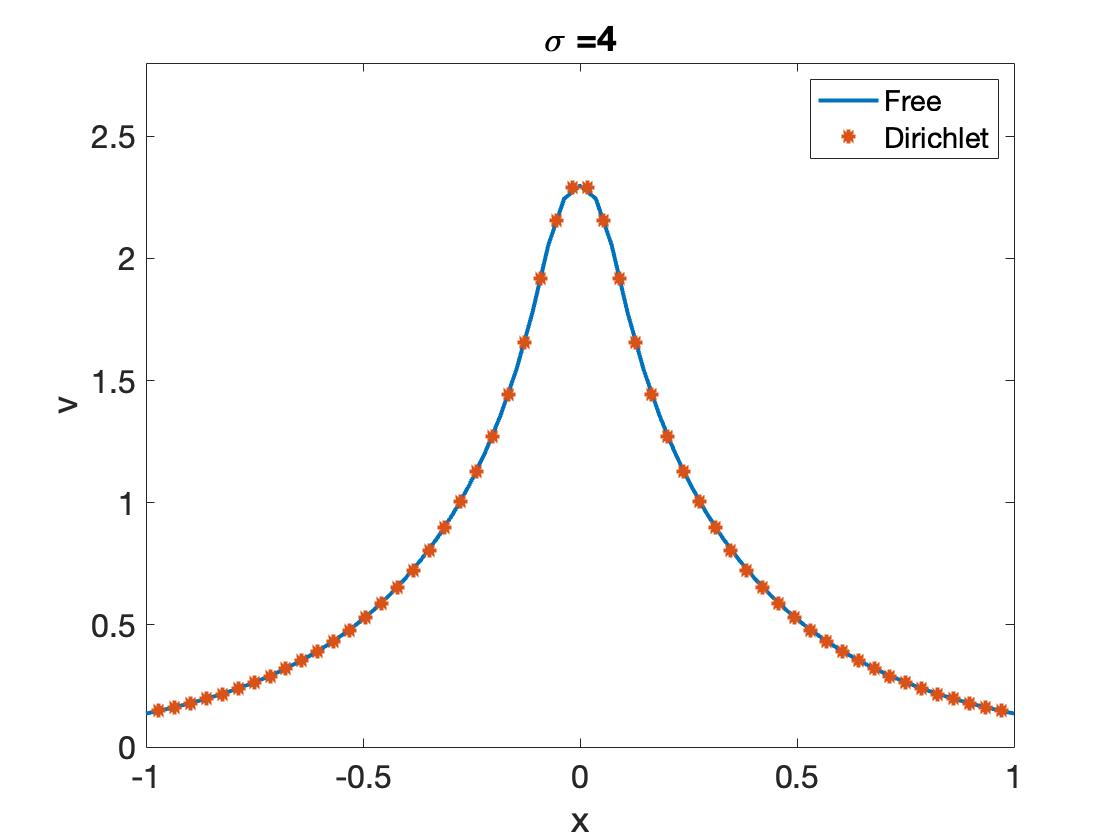}  
 \includegraphics[width=2.75in]{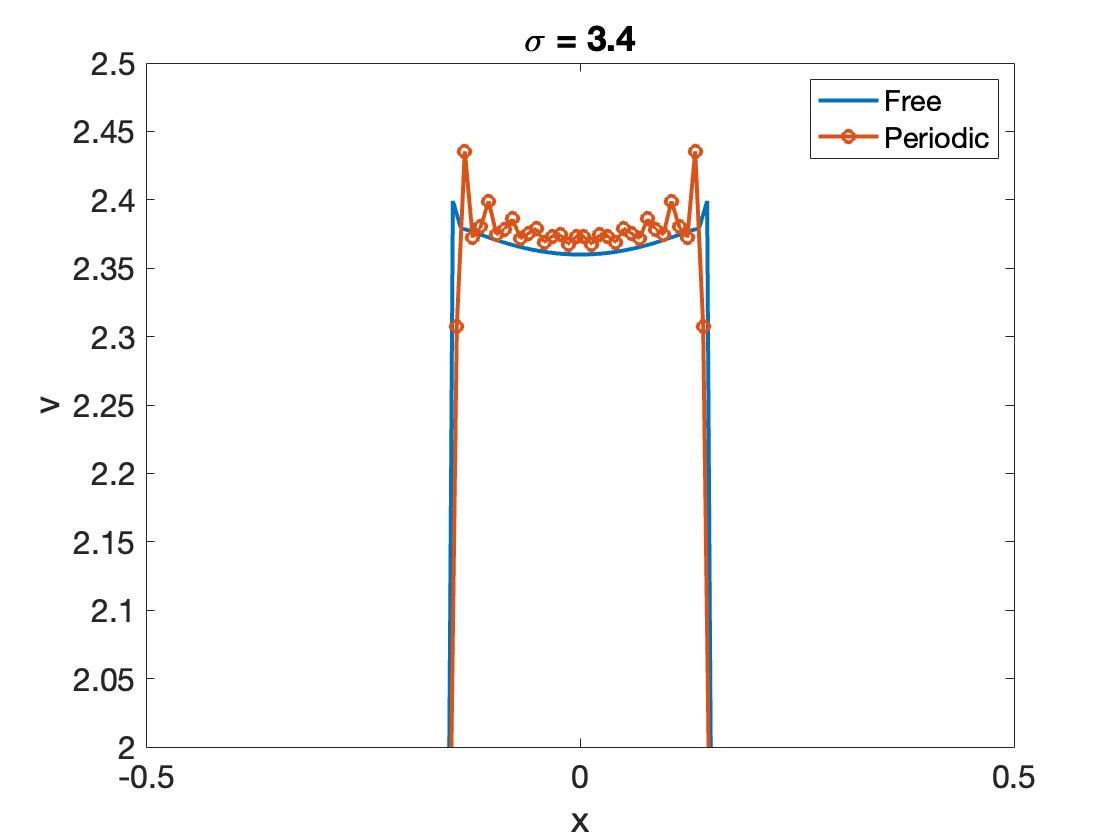}
\caption{Pulse solutions of nonlocal Gray-Scott model \eqref{e:GS} with map $K$ defined by exponential kernel with $\sigma =4$ (left column), and $\sigma =3.4$ (right column). Simulations compare nonlocal 'free' boundary constraints with periodic boundary conditions (FFT) and use $M = 2^{13}$ nodes.}
\label{fig:Exponential_RL_Periodic}
\end{figure}

\begin{figure}[ht] 
   \centering
 \includegraphics[width=2.75in]{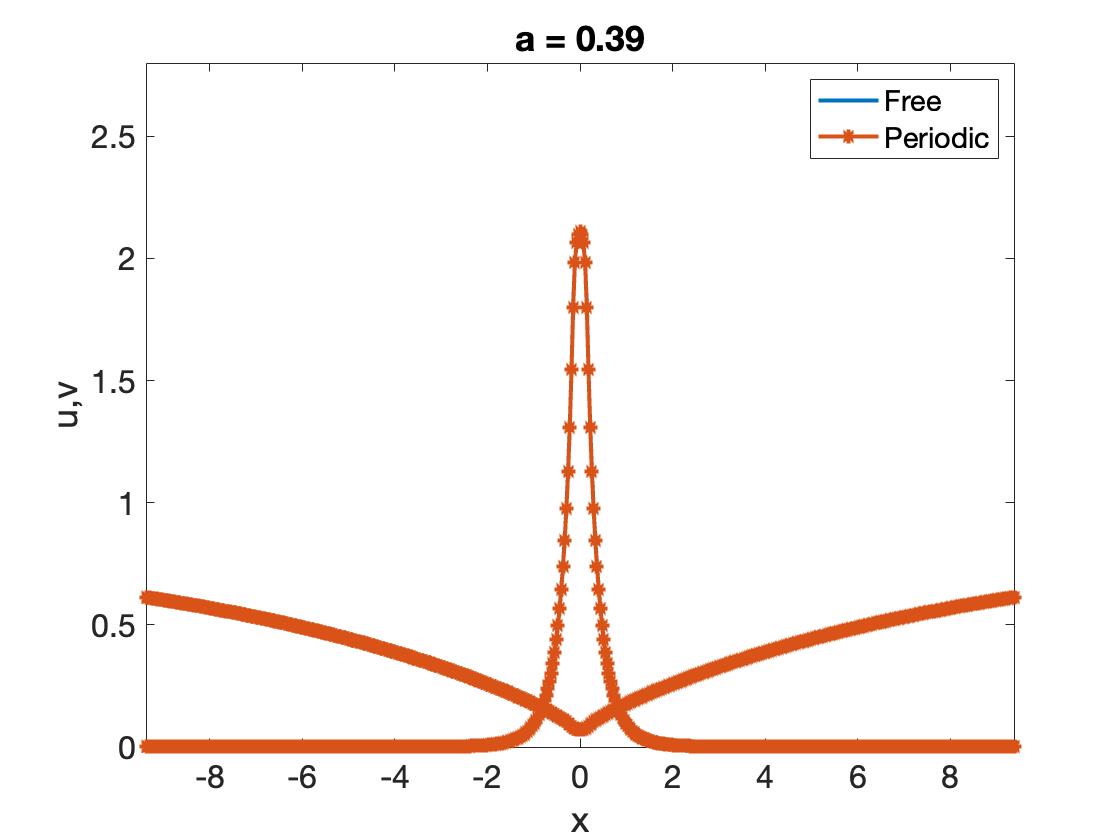}
  \includegraphics[width=2.75in]{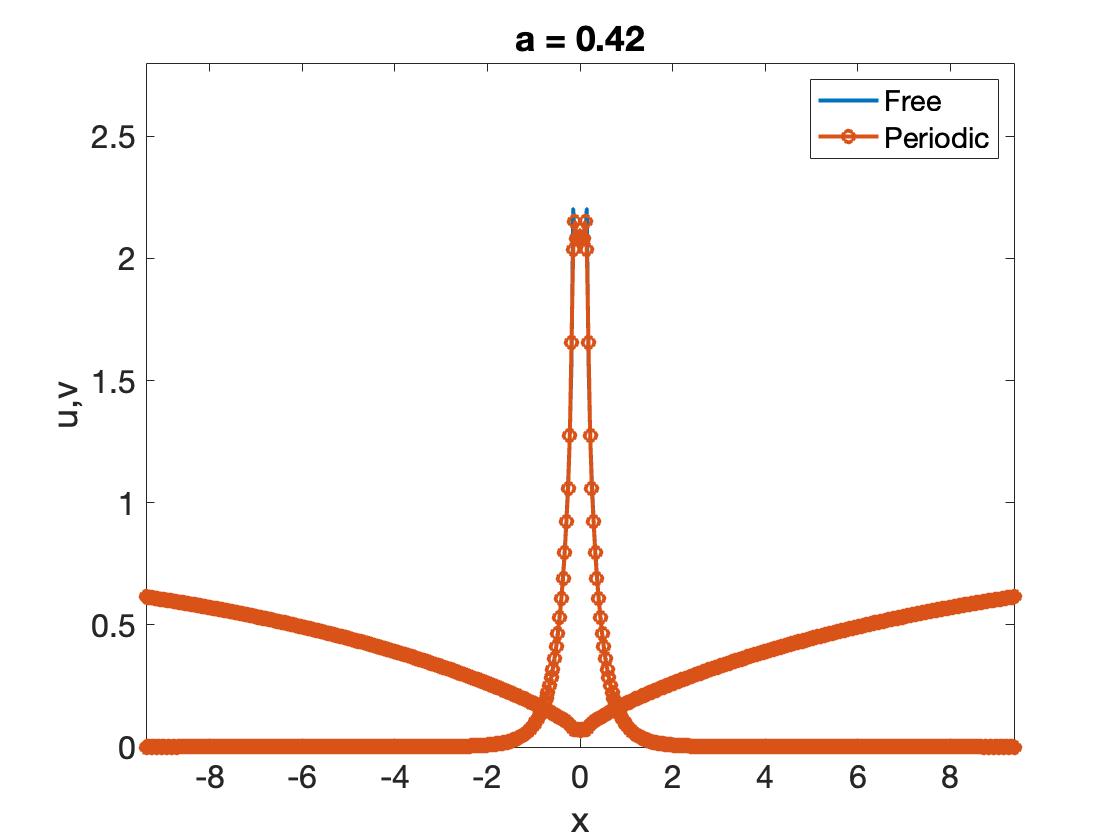}
 
 \includegraphics[width=2.75in]{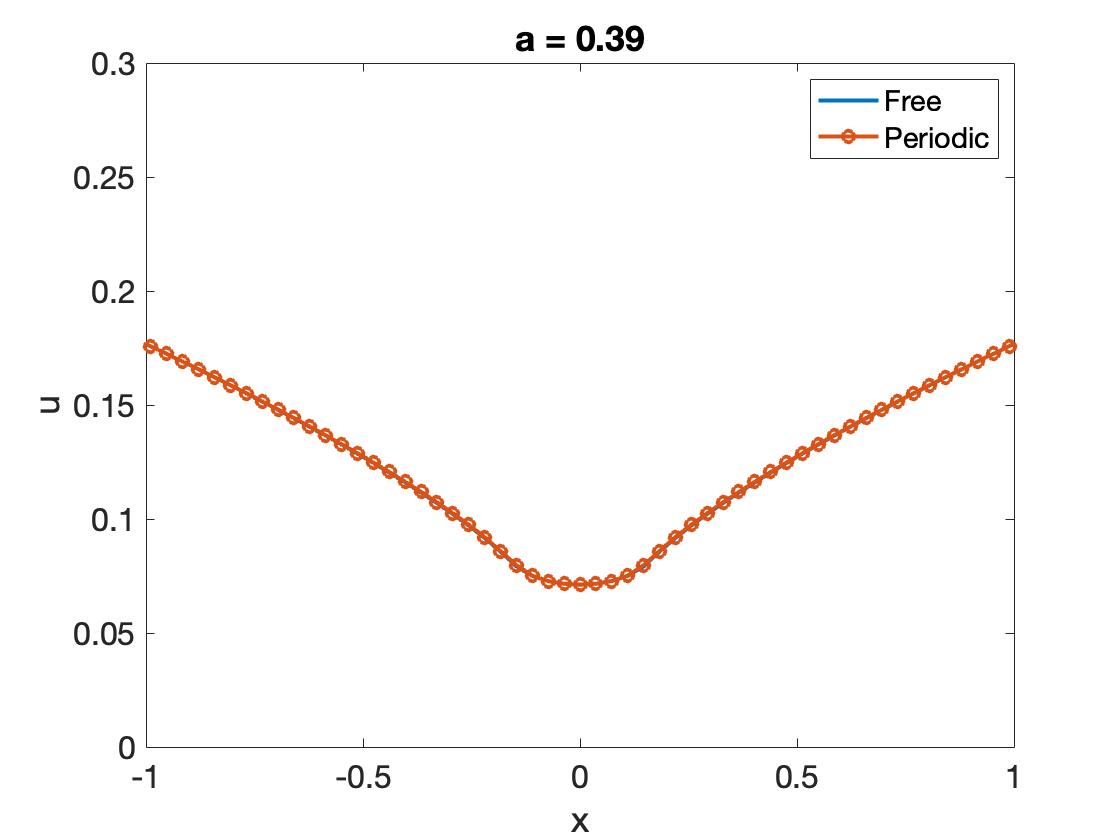}
 \includegraphics[width=2.75in]{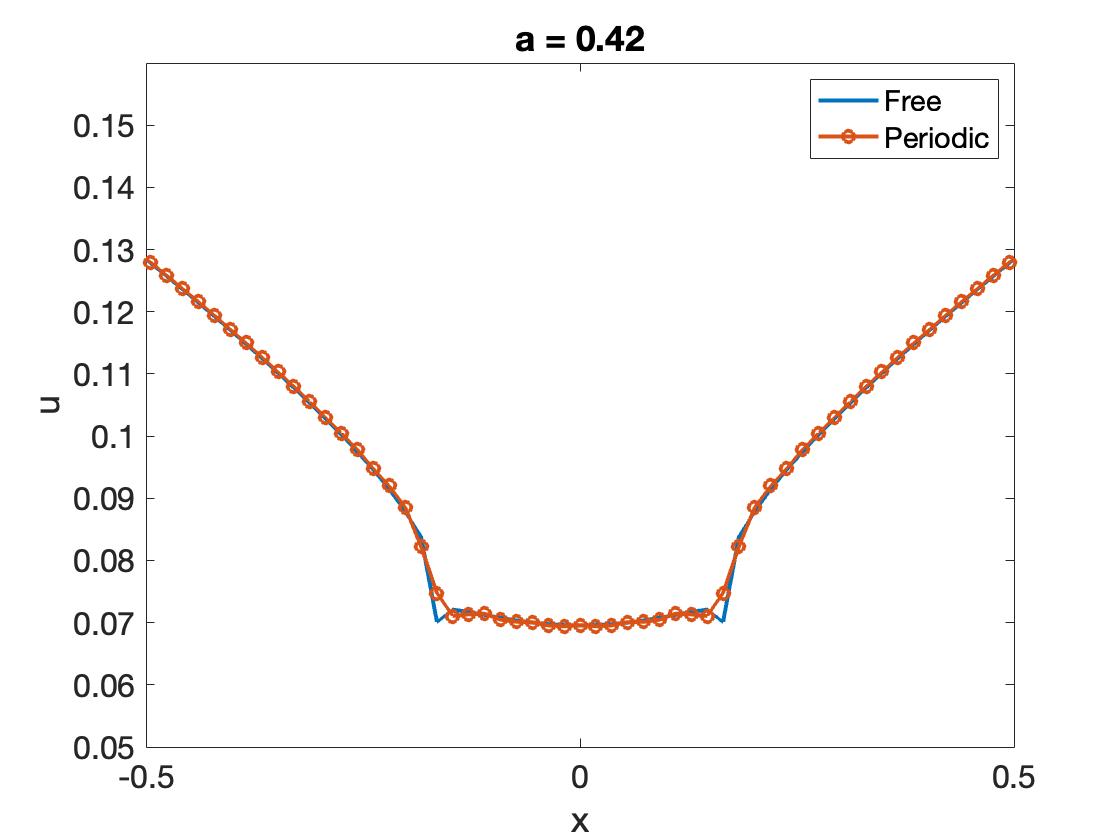}
 
 \includegraphics[width=2.75in]{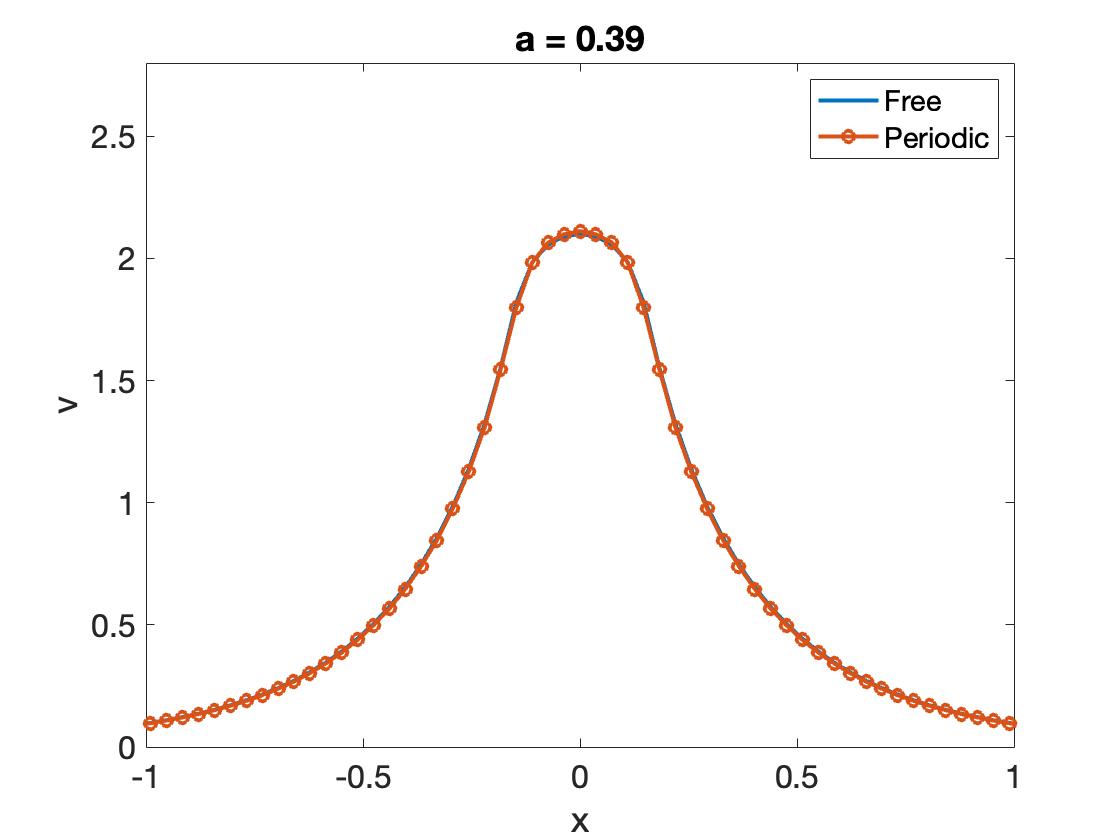}
 \includegraphics[width=2.75in]{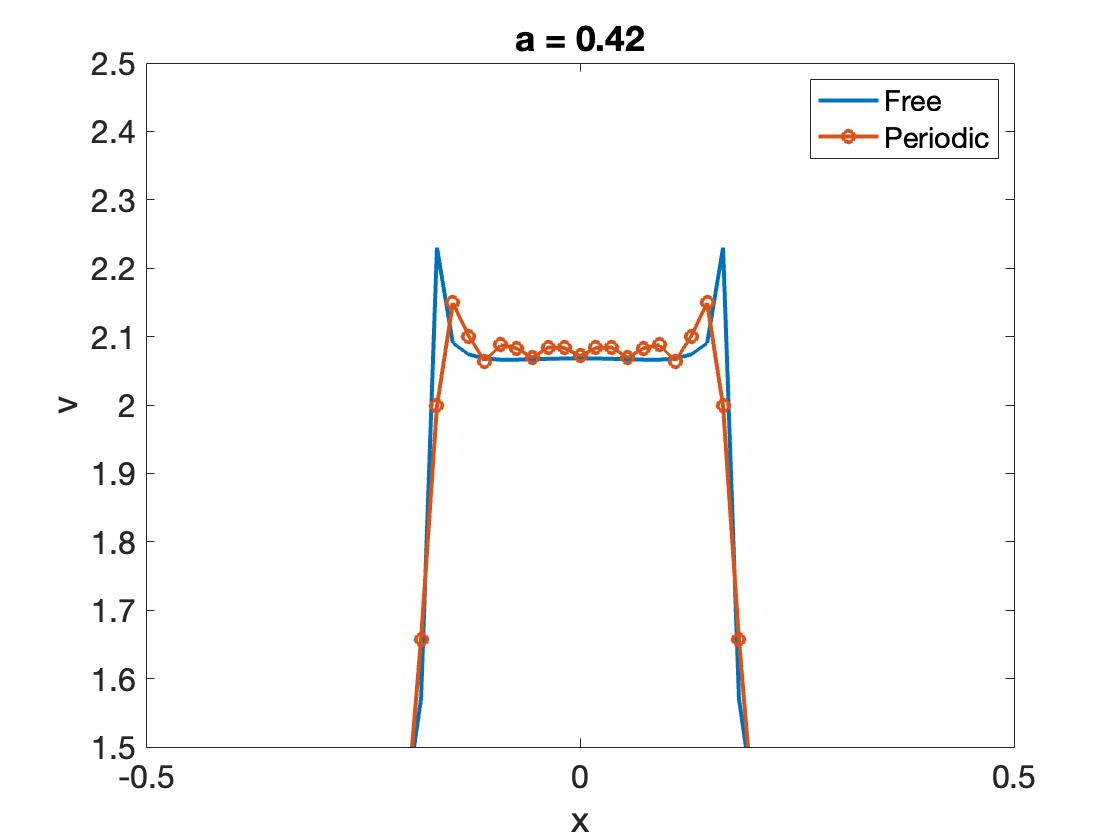}
\caption{Pulse solutions of nonlocal Gray-Scott model \eqref{e:GS} with map $K$ defined by algebraic kernel with $a =0.39$ (left column), and $a =0.42$ (right column). Simulations compare nonlocal 'free'  boundary constraints with periodic boundary conditions (FFT) and use $M = 2^{13}$ nodes.}
\label{fig:Algebraic_RL_Periodic}
\end{figure}

In our simulations we also compared the effects of changing the length of the computational domain. In Figures \ref{fig:Exponential_CompareLength} we plot pulse solutions obtained using  the exponential kernel with domains $\Omega_1 = [-75/4, -75/4]$ and $\Omega_2=[ -25,25]$ in the case of Dirichlet and 'free' boundary constraints, and with $\tilde{\Omega}_1 = [-75/4, -75/4]$ and $\tilde{\Omega}_2=[ -25,25]$ in the case of Neumann boundary constraints. In all cases, we assume
$\sigma =3.4$ and the same parameters and initial conditions as in \eqref{eq:continuation_IC}.
In Figure \ref{fig:Exponential_CompareLength} we have a similar set up, but now use the algebraic kernel with $a= 0.42$. 
The plots show that bigger domains generate shorter pulses in all cases. Not surprisingly, larger domains also allow for the solution to  approach more closely the limit $(u,v) = (1,0)$, which is expected for  pulses in the local Gray-Scott model.

\begin{figure}[ht] 
   \centering
 \includegraphics[width=2.75in]{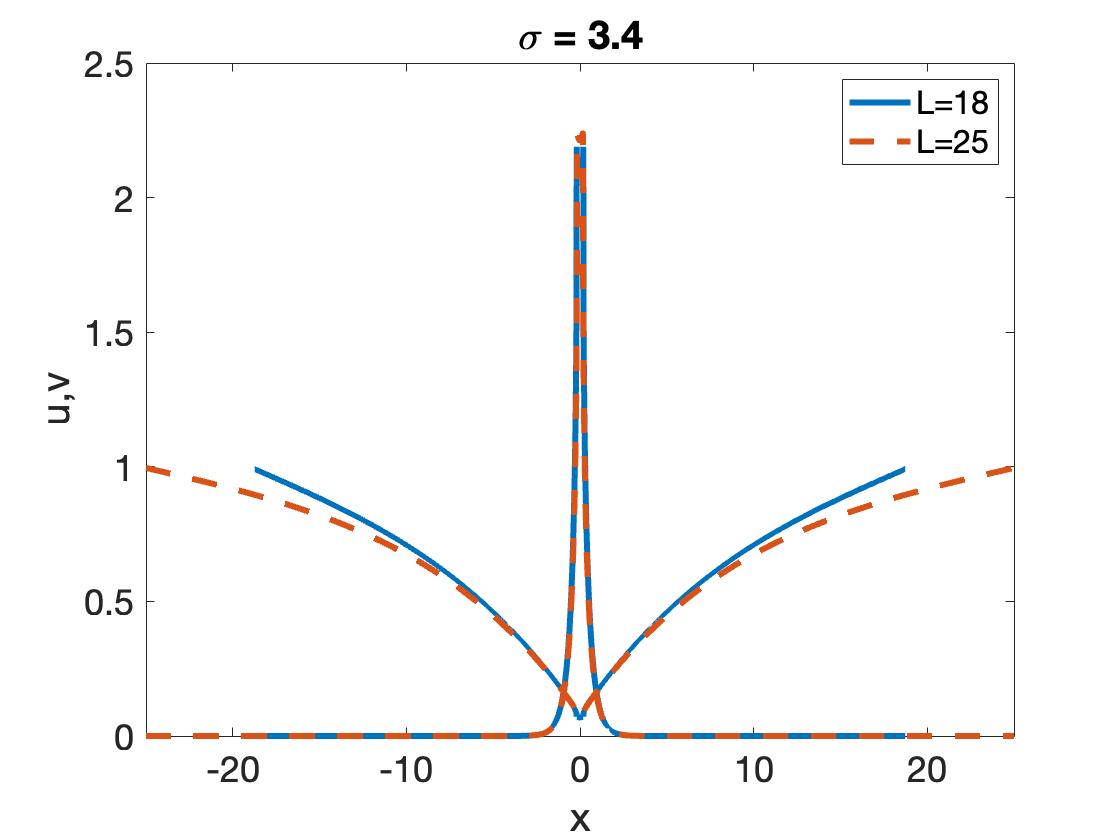}
 \includegraphics[width=2.75in]{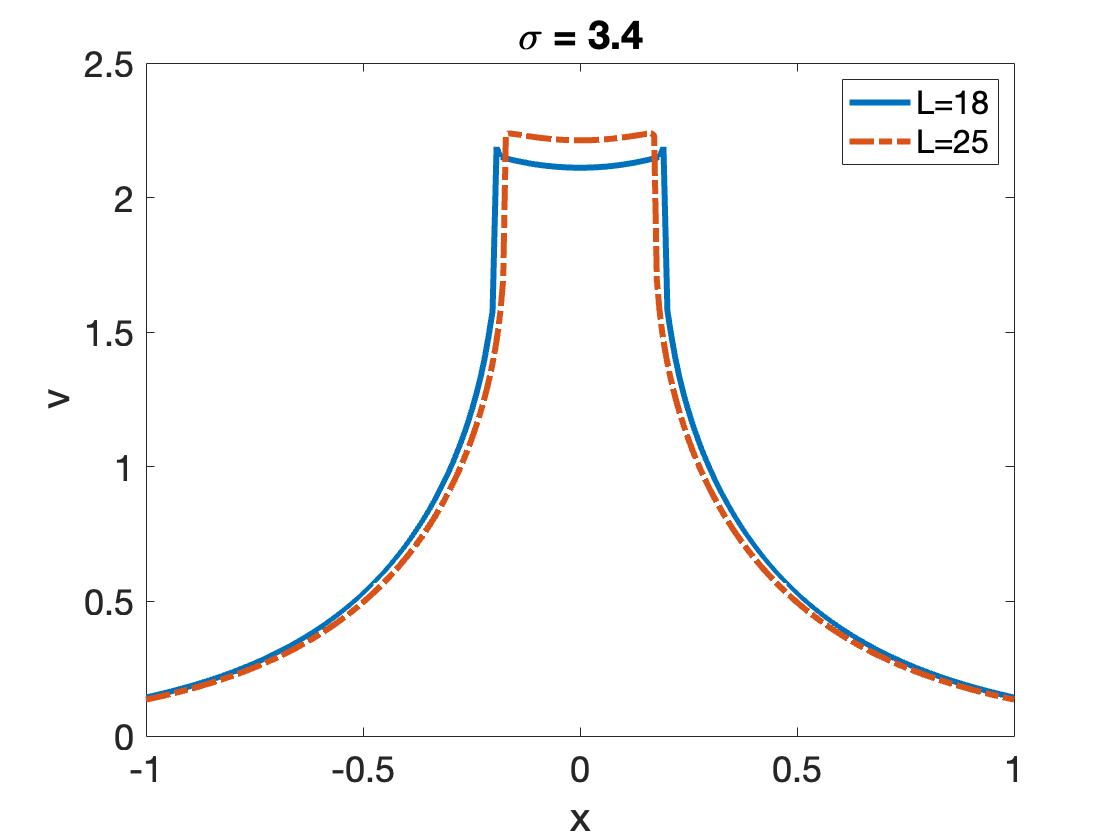}
 \includegraphics[width=2.75in]{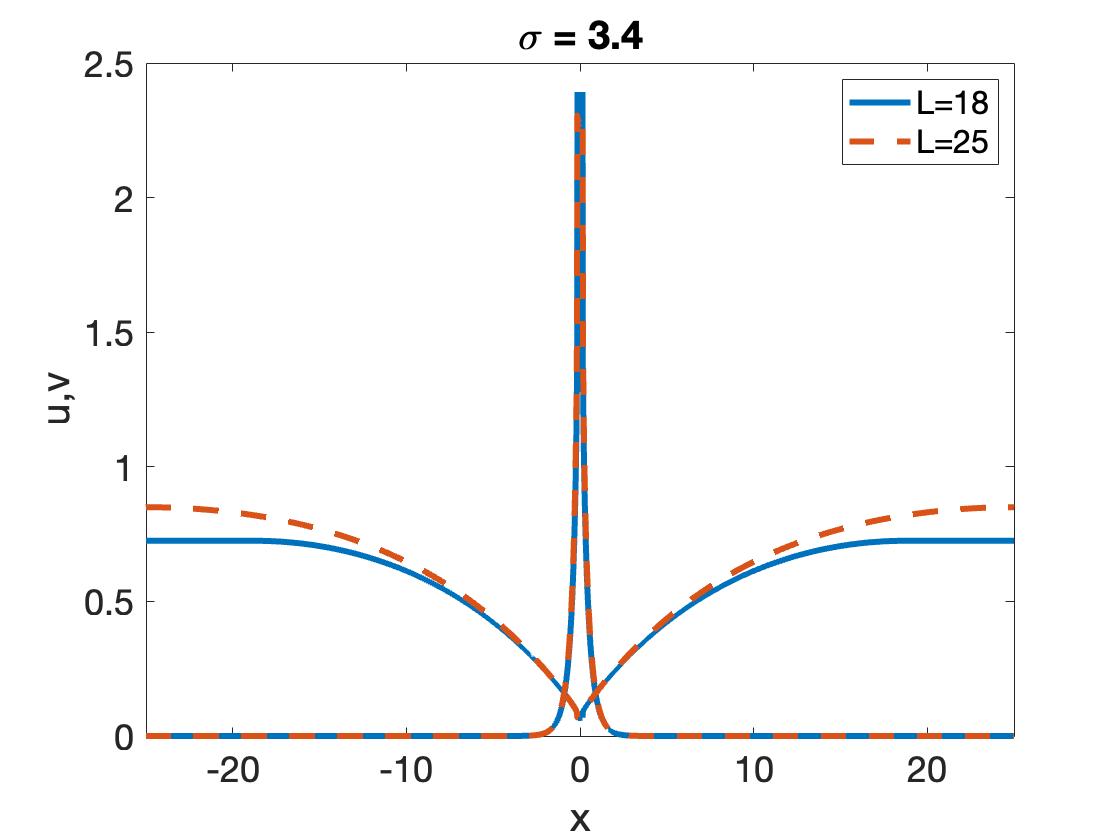}
 \includegraphics[width=2.75in]{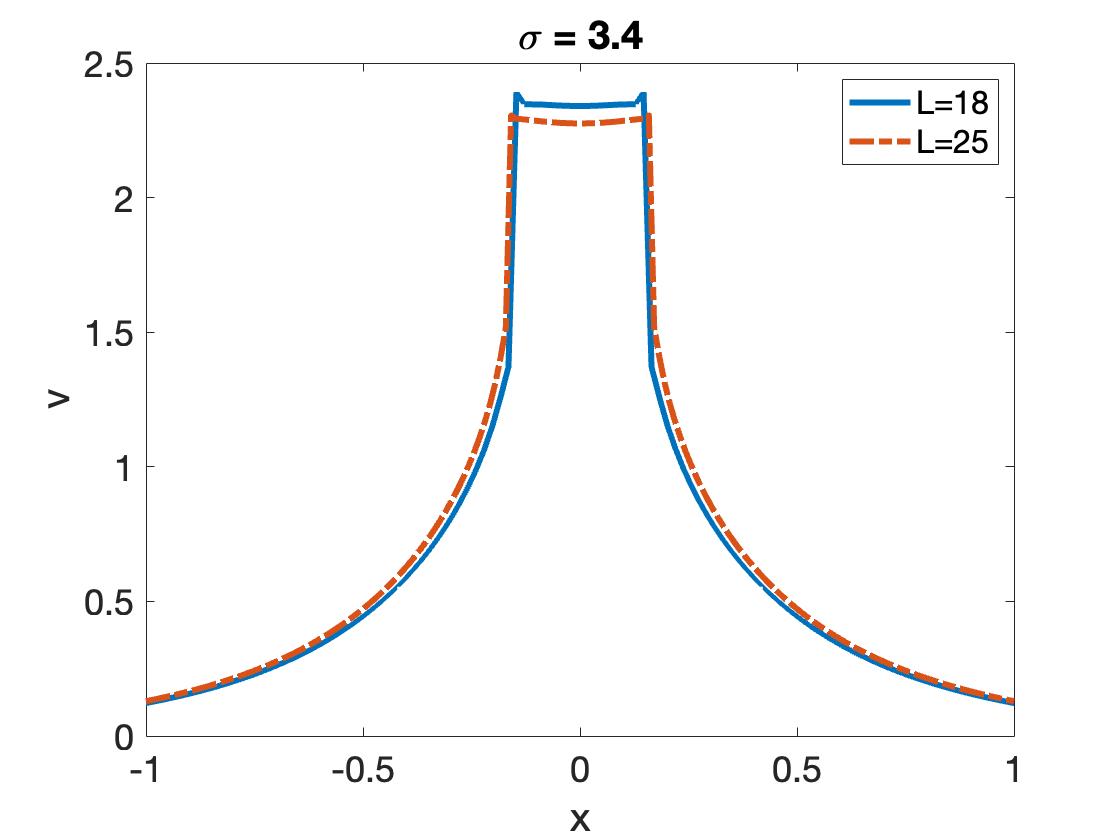}
 \includegraphics[width=2.75in]{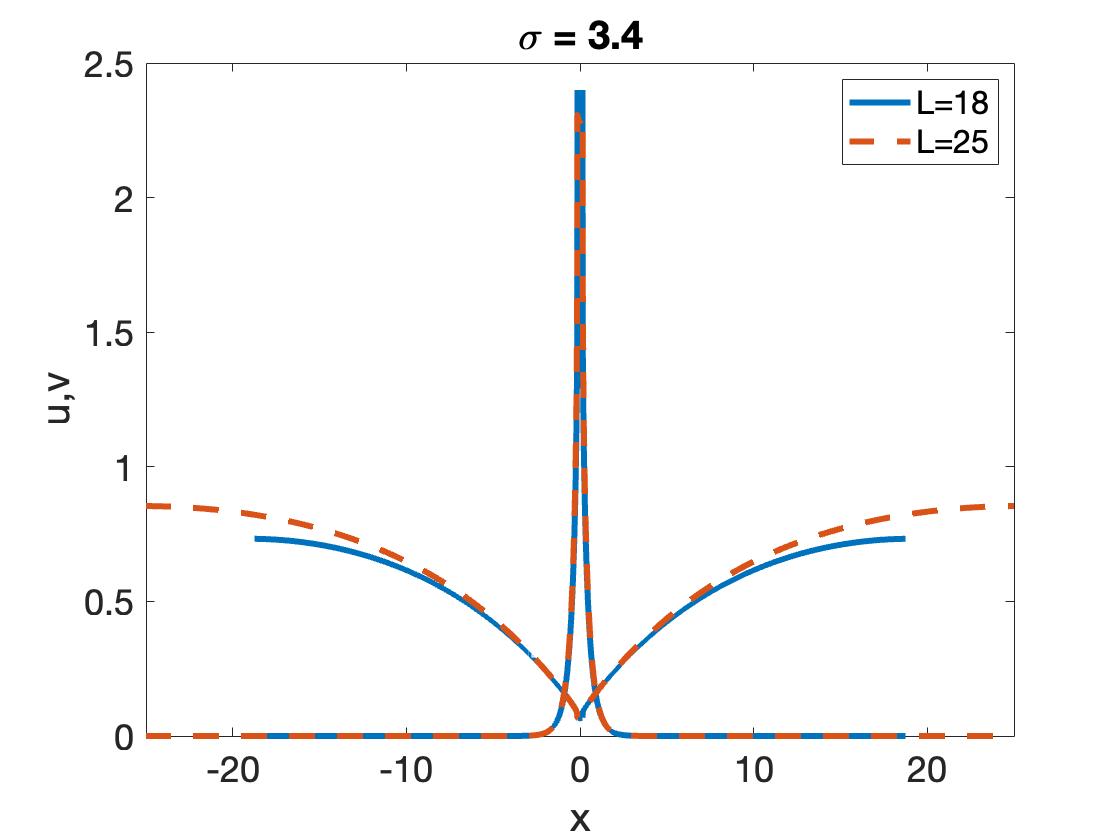}
 \includegraphics[width=2.75in]{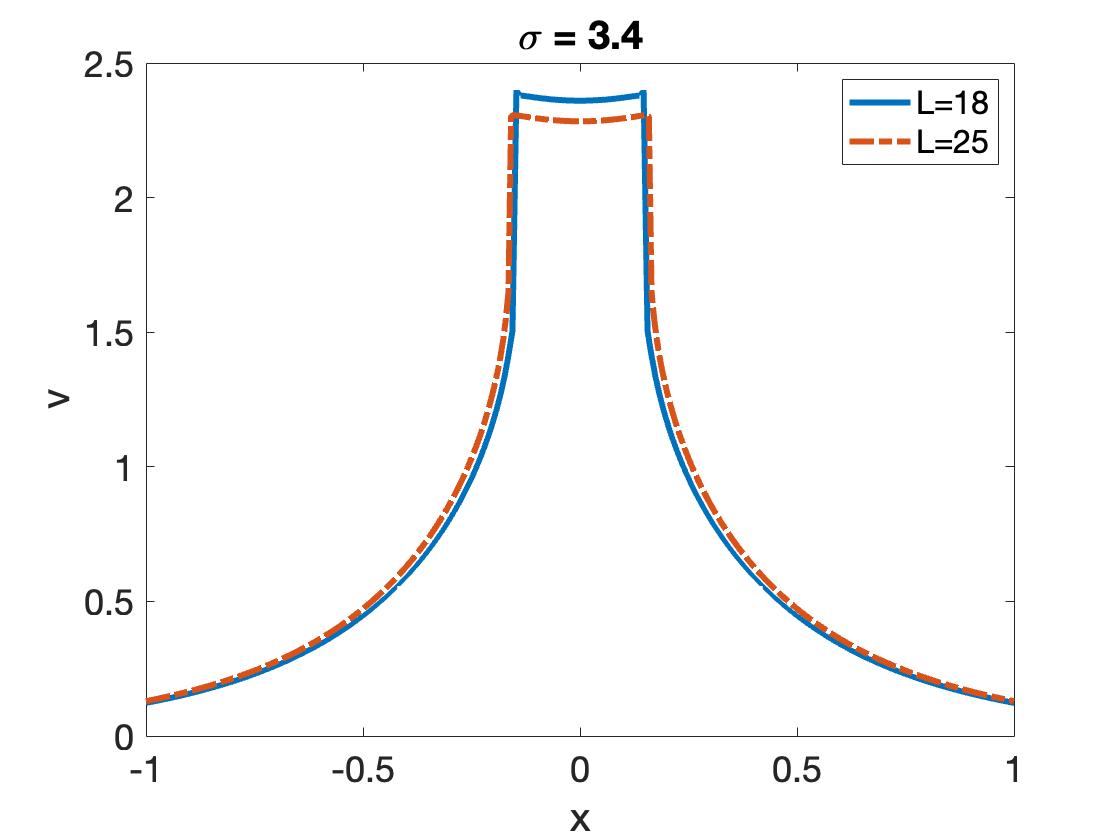}
\caption{Pulse solutions of nonlocal Gray-Scott model \eqref{e:GS} with map $K$ defined by exponential kernel with $\sigma =3.4$ and nonlocal Dirichlet (top row), Neumann (middle row), and `free' boundary constraints (bottom row).  Simulations compare effect of domain size with $\Omega =[-75/4,75/4]$ and $M =2^{12}$ (solid lines), and $\Omega =[-25,25]$ $M=2^{13}$ (dashed line). }
\label{fig:Exponential_CompareLength}
\end{figure}

\begin{figure}[ht] 
   \centering
 \includegraphics[width=2.75in]{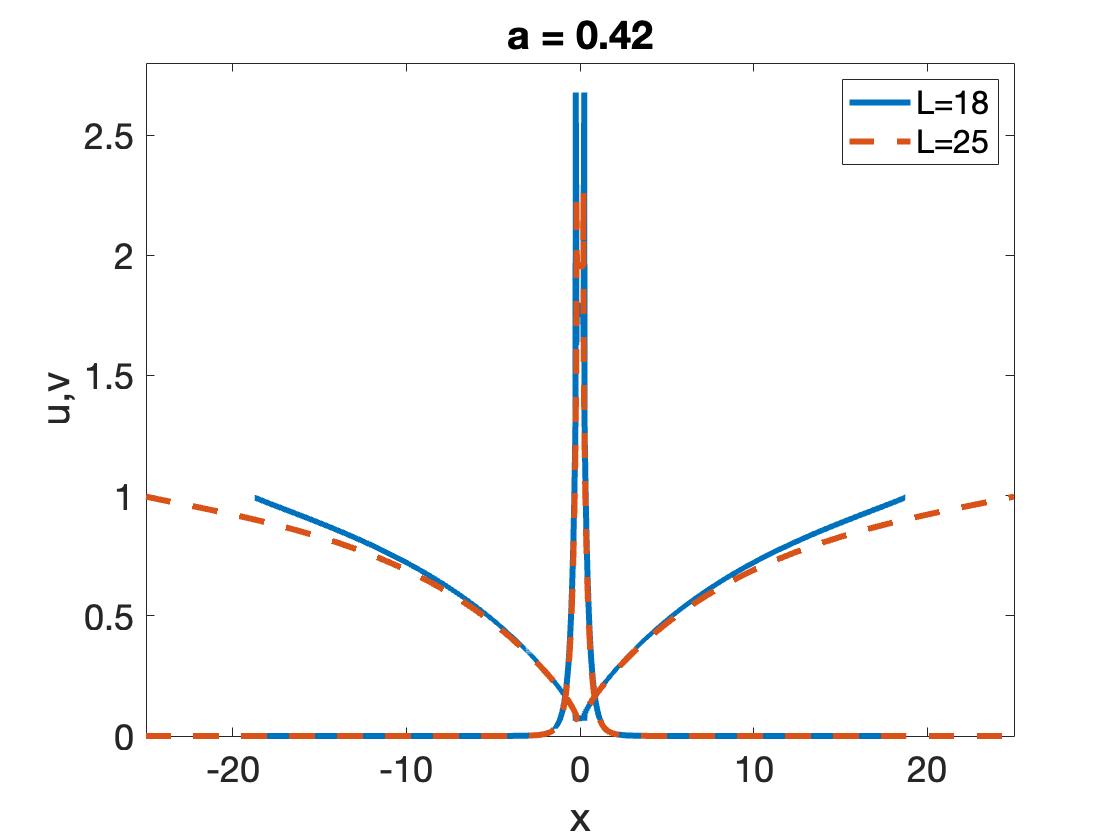}
 \includegraphics[width=2.75in]{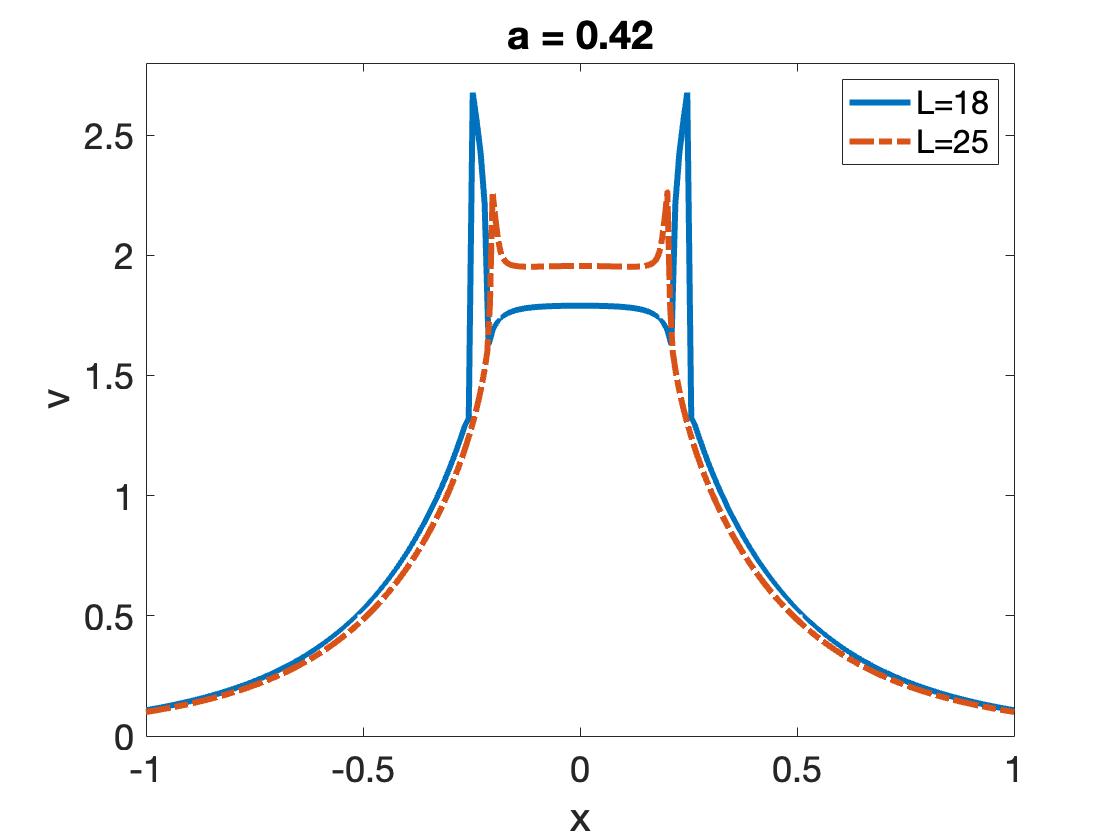}
 \includegraphics[width=2.75in]{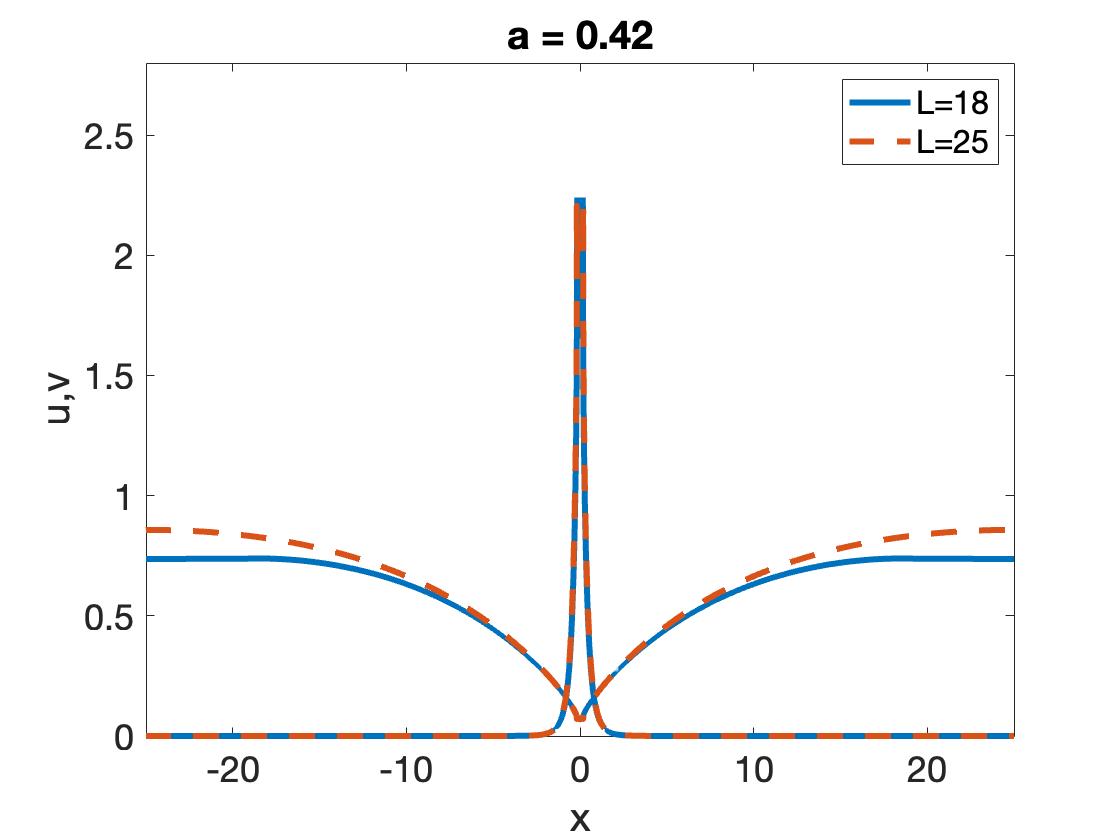}
 \includegraphics[width=2.75in]{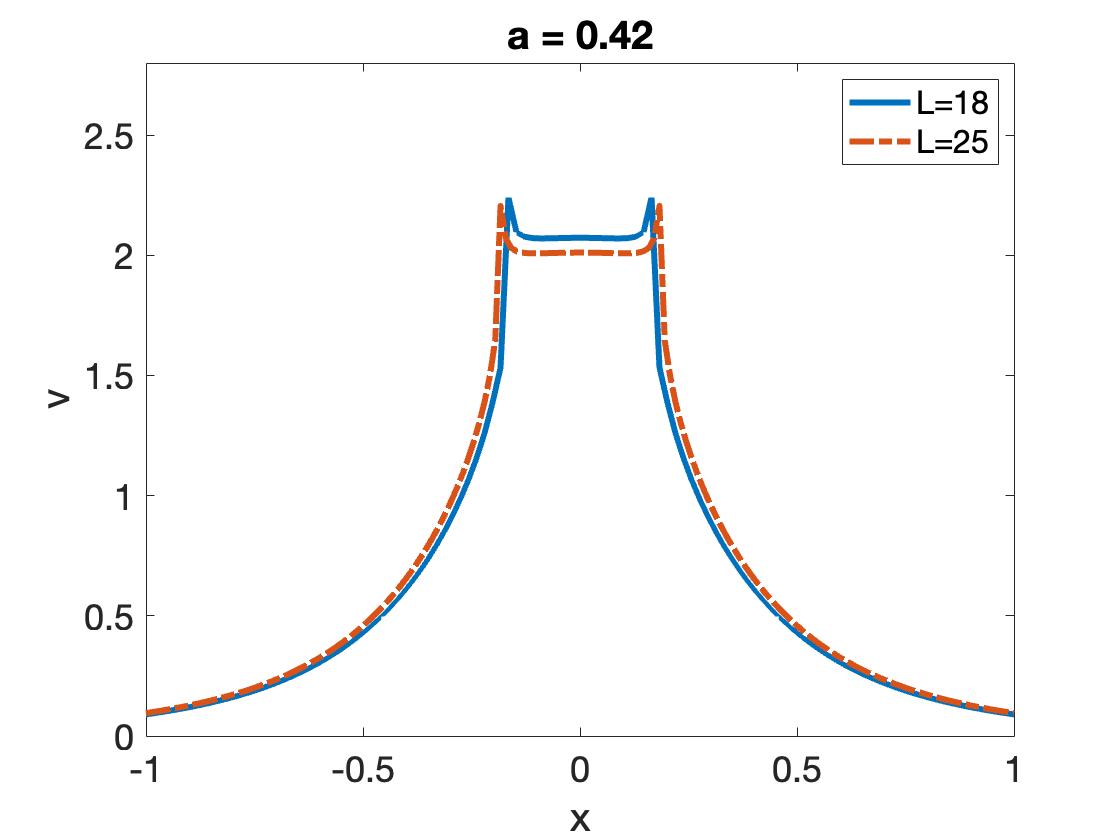}
 \includegraphics[width=2.75in]{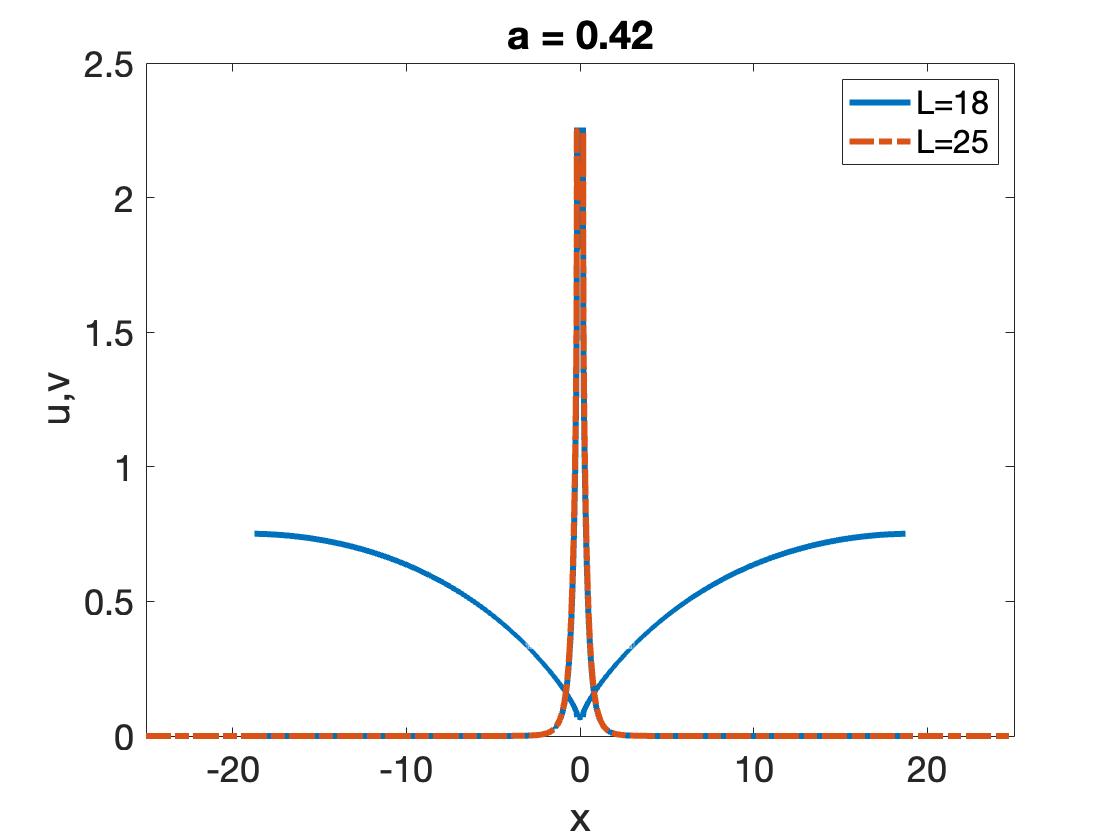}
 \includegraphics[width=2.75in]{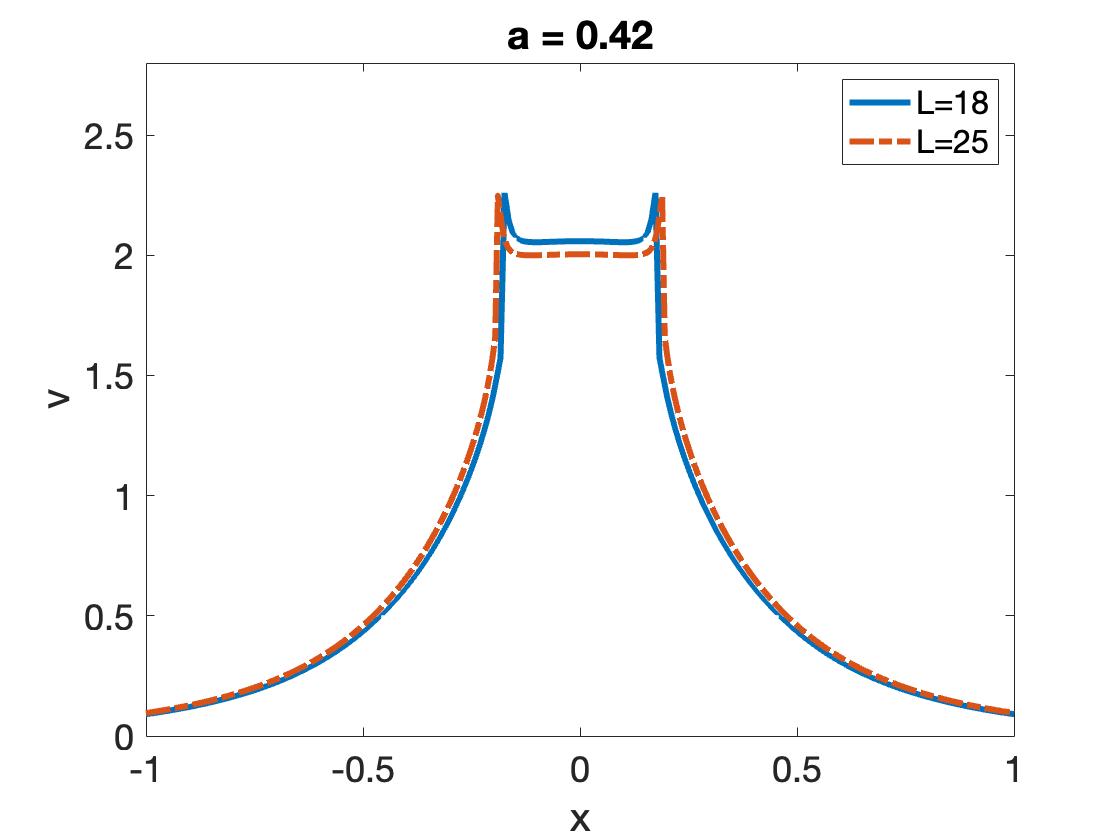}
\caption{Pulse solutions of nonlocal Gray-Scott model \eqref{e:GS} with map $K$ defined by algebraic kernel with $a=0.42$ and nonlocal Dirichlet (top row), Neumann (middle row), and `free' boundary constraints (bottom row).  Simulations compare effect of domain size with $\Omega =[-75/4,75/4]$ and $M =2^{12}$ (solid lines), and $\Omega =[-25,25]$ $M=2^{13}$ (dashed line). }
\label{fig:Algebraic_CompareLength}
\end{figure}

Finally, unless otherwise stated, all
simulations are performed with $M=2^{13}=8192$ quadrature nodes and a time step $dt\approx 1.5 E -4$. The tolerance $tol$ in the algorithms \ref{al:dirichlet}-\ref{al:neumann} is set to $10^{-8}$. The following section shows that the above spatial and temporal discretization guarantee that our solutions converged, thus the resulting pulse solutions and the different profiles obtained between nonlocal boundary conditions are not due to numerical artifacts.

\section{Convergence}\label{s:continuation_cvg}
In this section, we numerically check that the profiles obtained in the previous section with $M=2^{13} (=8192)$ quadrature nodes and a time step $dt\approx 1.5E-4$  converge, so that they are not affected by numerical noises when the tolerance of Algorithms \ref{al:dirichlet} and \ref{al:neumann} is set to $10^{-8}$.

To do this, we perform a series of convergence tests using the same setup studied in Section \ref{s:continuation}. Therefore, we set the parameters of nonlocal Gray-Scott model as follows
\[d_u = 1, \quad d_v= 0.01, \quad  A = 0.01, \quad B = (0.01)^{1/3}/2\]
and the initial conditions are defined using \eqref{eq:continuation_IC}.

For all three types of boundary constraints,
we perform a series of six simulations with $M =[2^9, 2^{10}, 2^{11}, 2^{12},2^{13}, 2^{14}]$ nodes with corresponding time steps $dt = [0.0025,$ $0.00125,$ $0.000625,$ $0.0003125,$ $0.00015625,$ $0.000078125]$ for all kernels considered, meaning exponential kernels with $\sigma\in \{3.4,4\}$ and algebraic kernels with $a\in\{0.39,0.42\}$. We refer to the previous section for the kernels' definitions.

For the nonlocal Dirichlet and the `free' boundary constraint case we use a physical domain $\Omega =[-75/2,75/2]$, while in the Neumann case we use $\tilde{\Omega} = [-75/4 -75/4]$ with a computational domain $\Omega =[-75/2, 75/2]$ for the operator $K$.

As no exact solutions are available, to determine the convergence of our algorithms we track the $L^1$ error between the solutions $(u_M, v_M)$, obtained with $M = [2^9, 2^{10}, 2^{11}, 2^{12}, 2^{13}]$ nodes, and the solution $(\hat{u}, \hat{v})$ obtained with $M =2^{14}$ nodes. Then, the order of convergence between two approximation $(u_M,v_M)$ and $(u_{M+1}, v_{M+1})$ can be  computed using
\[ \mbox{order}_u = \log \left[ \frac{ \| u_{M+1}- \hat{u} \|_{L^1} }{ \| u_{M}- \hat{u} \|_{L^1} } \right] /    \log \left[ \frac{h_{M+1}}{h_M} \right], \]
\[ \mbox{order}_v = \log \left[ \frac{ \| v_{M+1}- \hat{v} \|_{L^1} }{ \| v_{M}- \hat{v} \|_{L^1} } \right] /    \log \left[ \frac{h_{M+1}}{h_M} \right]. \]

To keep it concise, we summarize some of these results in two tables that can be found in Appendix. Note that we only report $L^1$ error in both tables, and not $L^2$ as done for the test with manufactured solutions in Section \ref{s:numericalmethods}. This choice of norm is motivated by the profile of the solution for the cases $\sigma=3.4$ and $a=0.42$ that lead to solutions with a strong gradient near the endpoints of the \emph{mesa} and \emph{cat-ear} profiles as shown in Figures~\ref{fig:Exponential_Nonlocal_BC}-\ref{fig:Algebraic_Nonlocal_BC}. As approximating a solution that presents discontinuities limits the order of convergence to half in $L^2$ norm and to one in $L^1$ norm, we decided to present the errors in $L^1$ norm and show that they recover a convergence order equal to one.

Indeed, first we show the convergence errors that we obtain when nonlocal Neumann boundary constraints are enforced in Table~\ref{tab:kernel_exp_alg_neumann_AB2_L1_H_eq_30_DT}. As expected, we recover a convergence order that is at least one on average for the four setups considered ($\sigma= 3.4,4$ and $a=0.39,0.42$). We note that the last order of convergence can sometimes be very large ($\sigma=3.4$ and $a=0.42$). This superconvergence behavior is due to the lack of exact solution which leads us to use our approximation with $M=2^{14}$ quadrature nodes to compute the error obtained with $M=2^{13}$ quadrature nodes.

Second, we show another series of convergence tests in Table~\ref{tab:kernel_algebraic_dirichlet_real_AB2_L1_H_eq_30_DT}
when we impose nonlocal Dirichlet or 'free' boundary constraints. We only display the most "extreme" cases that lead to pulse solutions with \emph{mesa} and \emph{cat-ear} profiles, i.e. the exponential kernel with $\sigma=3.4$ and the algebraic kernel with $a=0.42$. The algorithm has a similar behavior than in the nonlocal Neumann setup, meaning that we observe an overall convergence rate of one or more. We note that for the 'free' boundary constraints, the error in $v$ tends to stagnate between $M=2^{13}$ and $M=2^{14}$ which can be explained by an earlier quadratic convergence when we use $M=2^{12}$ quadrature nodes. 

Overall, our tests show that the Algorithms \ref{al:dirichlet}-\ref{al:neumann} used in Section \ref{s:continuation} to find steady-state pulse solution converge when refining the mesh size and the time step. Moreover, the final errors in $L^1$ norm between the approximation obtained with $M=2^{13}$ and $2^{14}$ quadrature points are of order $10^{-6}$ or smaller for all setups considered. It confirms that our simulations are accurate and do not require a finer mesh size or time step.

\section{Existence of Pulse Solutions}\label{s:existence}

The numerical simulations in the previous section confirm that nonlocal diffusive effects may be able to generate novel patterns in nonlocal reaction-diffusion systems. In the particular case of the Gray-Scott equations we find that increasing the width of the kernel $\nu$, as modeled by the parameters $\sigma$ or $a$, can create pulse solutions with \emph{mesa} profiles (exponential kernel) or \emph{cat-ear} profiles  (algebraically decaying kernel). To partially justify the formation of these patterns we look at the case when the operator $K$ is defined by the exponential kernel since, as shown below, one can obtain an equivalent PDE.

When $\sigma$ is large the operator behaves like the Laplacian and, as seen in the simulations, the   nonlocal equations  support smooth pulse solutions. One can justify the existence of these stationary patterns  using a perturbation argument. 
Indeed, letting $\eps = 1/\sigma$, one can formally write $Ku = (1-\eps^2 \Delta)^{-1} \Delta$. Pre-conditioning the steady state equations by the invertible operator $(1-\eps^2 \Delta)$ leads to the quasilinear  system,
\begin{equation}\label{e:quasilinear}
\begin{split}
0 & = (d_u+ \eps^2 A) \Delta u + A(1-u) - uv^2 + \eps^2 \Delta(uv^2)\\
0 & = (d_v + \eps^2 B) \Delta v -Bv  + uv^2 - \eps^2 \Delta(uv^2).
\end{split}
\end{equation}

If $\eps^2$ is small  we can ignore the last term, $\eps^2\Delta(uv^2)$, in both equations, and recover the local Gray-Scott model, albeit with slightly different diffusive coefficients. It is well known that this local system supports pulse solutions in the regime considered here, i.e. $\delta^2 = (d_v + \eps^2 B), $ $A = \delta^2 a, $ $ B = \delta^\beta b$, with $a,b \sim \rmO(1)$ and $0< \beta <1$, $0<\delta<<1,$ see \cite{arjen1997}. We can therefore consider solutions to \eqref{e:quasilinear}, of the form 
\[ u = u_* + \tilde{u}, \quad v= v_* + \tilde{v},\]
where $(u_*,v_*)$ are the pulse solution of the local system (ignoring the terms $\eps^2 \Delta(uv^2)$ in \eqref{e:quasilinear}), and the perturbation $U = (\tilde{u},\tilde{v})$ is the new unknown.  In this section we refer to $(u_*,v_*)$ as the 'local approximation'.

The resulting system can be written abstractly as
\[ U =LU + \tilde{F}(U,\eps)\]
with $L: H^2(\R) \times H^2(\R) \longrightarrow L^2(\R) \times L^2(\R)$ defined by
\[ LU = \begin{bmatrix}
(d_u+ \eps^2 A) \Delta  - A & 0 \\
0 & (d_v + \eps^2 B) \Delta  -B 
\end{bmatrix}
\begin{bmatrix}
\tilde{u} \\ \tilde{v}
\end{bmatrix} +
 \begin{bmatrix}
-v_*^2 & -2u_*v_*^2\\
v_*^2 &  2u_*v_*^2
\end{bmatrix}
\begin{bmatrix}
\tilde{u} \\ \tilde{v}
\end{bmatrix}
\]
Because $v^*$ decays exponentially to zero, we find that $L$ is a compact perturbation of an invertible operator, and it is therefore Fredholm with index zero. One can therefore use Lyapunov-Schmidt reduction to justify the existence of solutions $U$ to equation \eqref{e:quasilinear} for small values of $\eps$.

On the other hand, when $\eps =1/\sigma$ is large the nonlinear terms $\eps^2 \Delta(uv^2)$ can no longer be ignored. One must therefore treat the system \eqref{e:quasilinear} as a quasilinear system, i.e.
\begin{align*}
\begin{bmatrix} 0 \\0 
\end{bmatrix} = & \underbrace{
\begin{bmatrix}
(d_u+ \eps^2 A + \eps^2 v^2)   & 2 \eps^2 uv \\
- \eps^2 v^2  & (d_v + \eps^2 B- 2 \eps^2 uv) 
\end{bmatrix}}_{a(u,v;\eps)}
\begin{bmatrix} \Delta u \\ \Delta v \end{bmatrix} +
\begin{bmatrix}
-A & 0 \\
0 & -B 
\end{bmatrix} 
\begin{bmatrix} u \\ v \end{bmatrix}\\
& +
\begin{bmatrix}
A - uv^2 \eps^2 ( 4 v u_x v_x + 2u v_x^2) \\
 uv^2 - \eps^2(4v u_x v_x + 2u v_x^2)
\end{bmatrix}.
\end{align*}
If we assume for the moment that the matrix $a(u,v;\eps)$ is non-singular, then we can pre-condition the equations with $a(u,v;\eps)^{-1}$, leading to a system of now elliptic semilinear equations. This suggests it is possible to use barrier functions and comparison principles to prove existence of solutions, even for large values of $\eps$. To check this assumption,
Figure \ref{fig:compare_determinant} computes the determinant of $a(u,v;\eps)$ using the local approximation $(u_*,v_*)$ and the solution to the corresponding nonlocal problem obtained using our numerical algorithms.
The plots show that when using the local approximation, the determinant of $a(u_*,v_*,\eps)$ crosses the horizontal axis when $x \sim 0$, where as the numerical approximation to the nonlocal equation leads to a non-negative determinant. The plots therefore suggest that in order to support pulse solutions when $\eps$ is large (i.e. when width of the kernel $\nu$ is wide), the system must flatten the spike, partially justifying the \emph{mesa} profile seen in the numerical solutions.  We plan to rigorously justify the above arguments in a companion paper. 

\begin{figure}[ht] 
   \centering
 \includegraphics[width=2.75in]{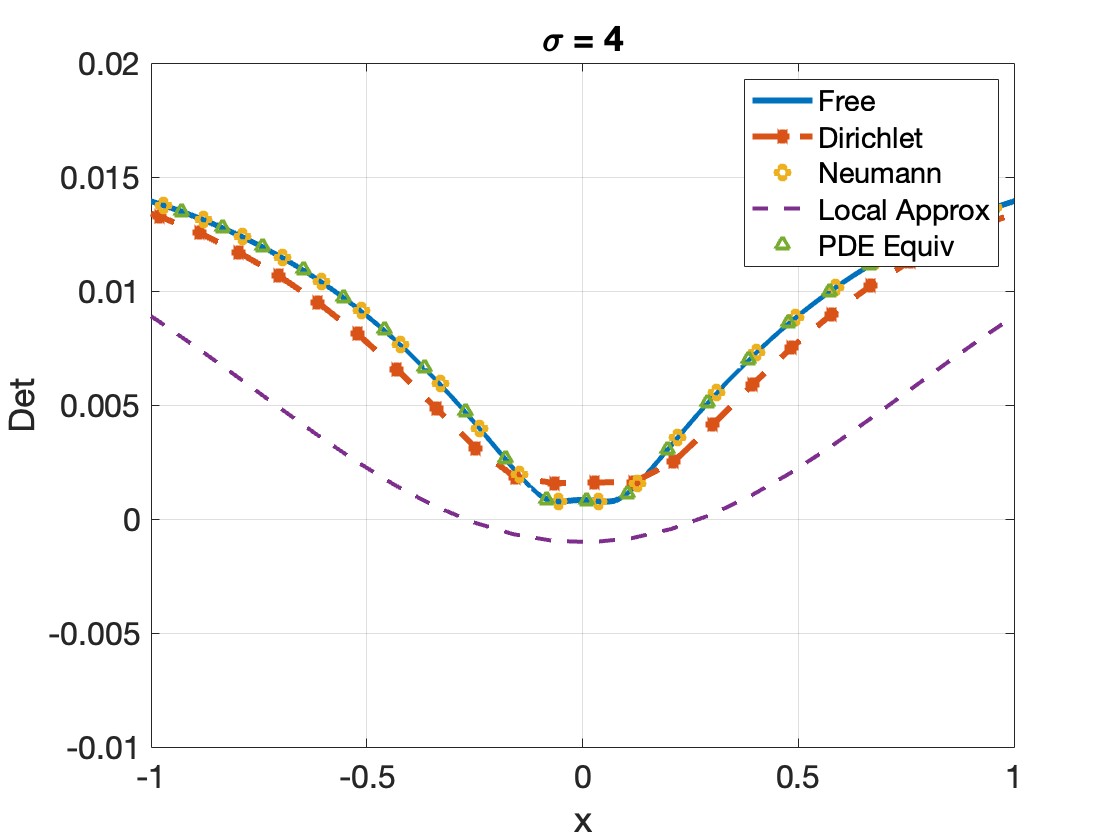}
 \includegraphics[width=2.75in]{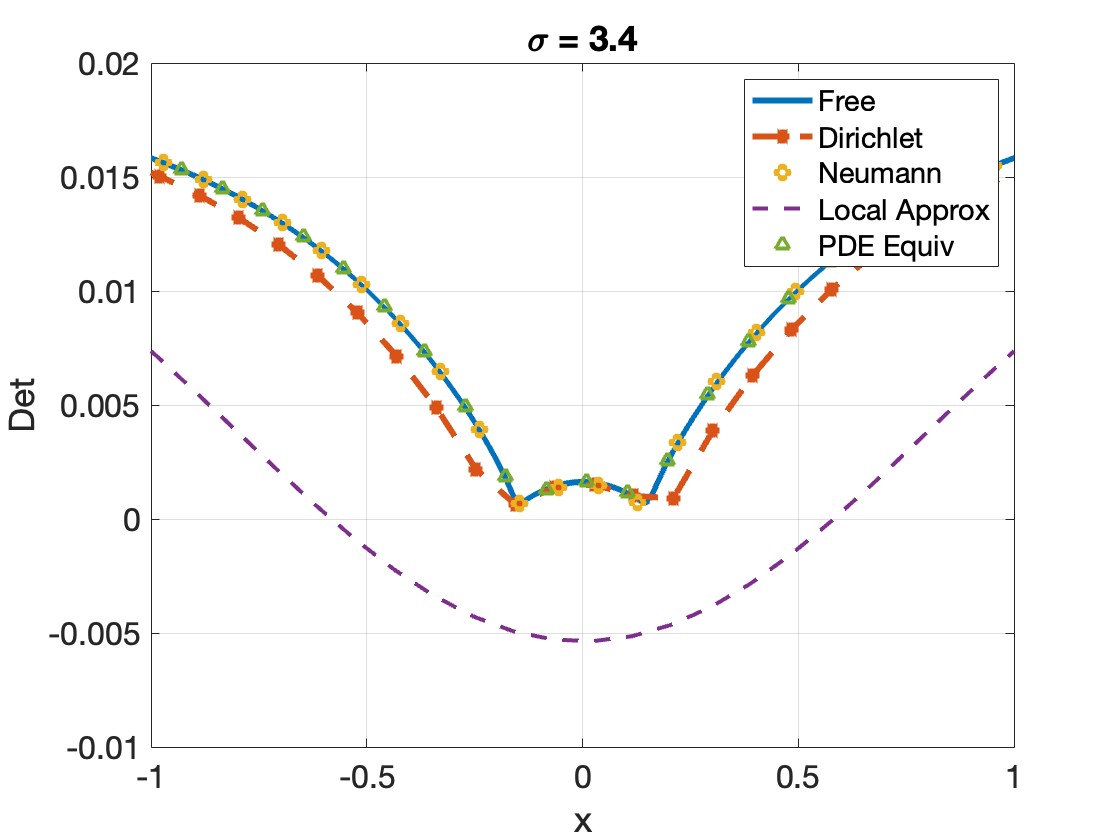}
 
 \caption{Plot of the det$[ a(u,v;\eps)]$ evaluated using the 'local approximation', $(u_*,v_*)$, the numerical solutions of \eqref{e:GS} using our quadrature methods, and the solution of PDE \eqref{e:quasilinear} obtained using Neumann boundary conditions on a domain $\Omega=[-75/4, 75/4]$
 Left plot shows results using $\sigma = 4$, right plot using $\sigma =3.4$. Other parameters are: $d_u = 1, \quad d_v= 0.01, \quad  A = 0.01, \quad B = (0.01)^{1/3}/2$.  }
\label{fig:compare_determinant}
\end{figure}

Finally, we mention that numerical simulations of the PDE \eqref{e:quasilinear} using Neumann boundary conditions do lead to the same patterns seen in simulations of the corresponding nonlocal equations, \eqref{e:GS}, using our quadrature method, see Figure \ref{fig:compare_determinant}.

\section{Conclusion}
\label{s:conclusion}

In this paper we considered the nonlocal system \eqref{e:GS}, previously introduced by the authors in \cite{cappanera2024analysis}.
This model is a generalization of the well-known local Gray-Scott model, a reaction diffusion system known for producing a wealth of interesting solutions, including pulses, periodic solutions, self-replicating patterns, and spatio-temporal chaos \cite{pearson1993}. Equations \eqref{e:GS} use the same nonlinearities as the local system, but replace the Laplacian term with a convolution operator modeling nonlocal diffusion. 

The motivation for studying the integro-differential equations \eqref{e:GS} comes from problems in ecology, where similar nonlocal operators are used to describe the spread of individuals, plant seeds, or infections. In these examples, these integral operators are defined using symmetric and spatially extended convolution kernels, which creates some difficulties when posing the equations on bounded domains. In particular, one needs to impose nonlocal boundary constraints in order to define the operator in a way that is consistent with applications. To properly enforce these constraints when running numerical simulations one can use finite elements or quadrature methods to discretize the equations in space. However, in these formulations the nonlocal operator is then approximated by a dense matrix, which can slow down computations. As a result, most numerical simulations of these, and similar nonlocal equations, are done using Fourier spectral methods. Although this approach generates efficient numerical schemes it also enforces periodic boundary conditions, which may not be appropriate for some problems. The goal of this paper is to analyze if and when different types of boundary constraints, i.e. local vs. nonlocal, lead to different solutions. For simplicity, here we considered only pulse solutions of the 1-d nonlocal Gray-Scott model \eqref{e:GS}.

To run simulations of the nonlocal Gray-Scott model, we used the quadrature method described in \cite{jaramillo2021} for the spatial discretization of the nonlocal operator, combined with a second-order Adams-Bashforth method for the time marching.
In addition to considering nonlocal Dirichlet, nonlocal Neumann, 'free' and periodic boundary constraints, we also study the effect of using thin-tailed (exponential) and fat-tailed (algebraic) kernels. Our numerical investigations show that the choice of kernel and boundary conditions have very limited impact when the nonlocal operator is close to the Laplacian, i.e. when the nonlocal Gray-Scott model is close to its local counterpart. However, when the spread of the nonlocal operator is large (i.e. exponential kernel with $\sigma=3.4$ and algebraic kernel with $a=0.42$), we showed that both, the type of kernel used and the type of boundary conditions considered, impact the profiles of pulse solutions. For instance, the exponential kernel leads to  \emph{mesa}-profiles while the algebraic kernel generates \emph{cat-ear} profiles. 
In addition, nonlocal Dirichlet boundary constraints lead to more prominent \emph{mesa} and \emph{cat-ear} profiles when compared to nonlocal Neumann and 'free' boundary constraints. Our simulations also show that increasing the size of the computational domain has a small effect in the overall shape of the pattern, generating shorter pulses, as well as allowing solutions to approach more closely the expected limit $(u,v) = (1,0)$ in the far field.

Similarly, when comparing periodic boundary conditions versus nonlocal boundary constraints, we find that 
the kernel's type and the boundary conditions used have minimal impact when the nonlocal operator behaves like the Laplacian. On the other hand, when periodic boundary constraints are applied, kernels with a large spread lead to pulse solutions with oscillations, 
while nonlocal boundary constraints generate \emph{mesa} and \emph{cat-ear} profiles with sharper corners. 
Thus, we conclude that the type of kernel and boundary constraints play a crucial role in the formation of pulse solution for the nonlocal Gray-Scott model, only when these kernels are wide.

Finally, we showed that when the nonlocal operator is defined using the exponential kernel the stationary nonlocal Gray-Scott model is equivalent to a system of quasilinear PDEs. We then give a heuristic argument that can be used to prove existence of smooth pulse solutions using perturbation methods and Lyapunov-Schmidt reduction. We also note that if the solutions satisfy certain assumptions, the system can further be simplified to a set of semilinear equations, thus partially justifying the emergence of pulses with \emph{mesa} profiles and providing a path for proving their existence using comparison principles. 
We plan to address the existence of these novel patterns in a future paper. In addition for future numerical investigations, we also plan to study the impact of boundary conditions on periodic patterns and extend our one-dimensional algorithm to problems set up on planar domains, i.e. $\Omega \subset \mathbbm{R}^2$.



\section{Appendix}

We present here the two tables that summarize the convergence tests discussed in Section \ref{s:continuation_cvg}. The Table \ref{tab:kernel_exp_alg_neumann_AB2_L1_H_eq_30_DT} displays the $L^1$ error and order of convergence obtained for setups with nonlocal Neumann boundary conditions. The Table \ref{tab:kernel_algebraic_dirichlet_real_AB2_L1_H_eq_30_DT} displays the $L^1$ error and order of convergence obtained for setups with either nonlocal Dirichlet or 'free' boundary conditions. This last table focuses on the setups where the kernel has a large spread ($\sigma=3,4$ and $a=0.42$) as the resulting solutions present more complex structures.

\begin{table}
\centering
\begin{tabular}{  m{1.7cm} m{1.5cm} m{1cm} m{1.7cm} m{1.5cm} m{1.7cm} m{1.5cm}}
 \multicolumn{7}{c}{\bf \Large Continuation: Convergence of Neumann Problem}\\[1ex]
 \multicolumn{7}{c}{\bf Exponential Kernel, $\sigma = 3.4$}\\[1ex]
\hline \hline
 dt & h & M & Error $u$ & Order $u$ & Error $v$ &Order $v$\\
 \hline \hline
    0.0025&      0.14648&     $2^9$&     2.96E-3&    - &       2.03E-1&    - \\
    0.00125&     0.073242&    $2^{10}$&    7.39E-4&      2.00&      8.03E-2&    1.34\\
    0.000625&     0.036621&    $2^{11}$&    1.82E-4&     2.02&      2.90E-2&    1.47\\
    0.0003125&     0.018311&    $2^{12}$&    7.43E-5&     1.30&     9.29E-3&    1.64\\
    0.00015625&    0.0091553&    $2^{13}$&    2.53E-6&      4.87&    1.11E-6&    13.03\\
\hline\\[2ex]
 \multicolumn{7}{c}{\bf Exponential Kernel, $\sigma = 4.0$}\\
\hline \hline
 dt & h & M & Error $u$ & Order $u$ & Error $v$ &Order $v$\\
 \hline \hline
    0.0025&      0.14648&      $2^{9}$&     3.70E-3&    - &      3.53E-2&    - \\
    0.00125&     0.073242&     $2^{10}$&    9.58E-4&    1.95&     6.59E-3&       2.42\\
    0.000625&     0.036621&     $2^{11}$&     1.10E-4&    3.12&     1.42E-3&     2.21\\
    0.0003125&     0.018311&     $2^{12}$&    5.07E-6&     4.44&    2.38E-6&     9.22\\
    0.00015625&    0.0091553&     $2^{13}$&    1.71E-6&    1.57&    7.93E-7&      1.59\\
\hline\\[2ex]
 \multicolumn{7}{c}{\bf Algebraic Kernel, $a = 0.39$}\\[1ex]
\hline \hline
 dt & h & M & Error $u$ & Order $u$ & Error $v$ &Order $v$\\
 \hline \hline
    0.0025&      0.14648&      $2^{9}$&     3.08E-3&    - &      4.77E-2&    - \\
    0.00125&     0.073242&     $2^{10}$&    7.69E-4&     2.00&     9.35E-3&     2.35\\
    0.000625&     0.036621&     $2^{11}$&    9.36E-5&     3.04&     1.70E-3&     2.46\\
    0.0003125&     0.018311&     $2^{12}$&    6.07E-6&     3.95&    9.27E-6&     7.52\\
    0.00015625&    0.0091553&     $2^{13}$&    1.54E-6&     1.98&    2.80E-6&     1.73\\
\hline\\[2ex]
 \multicolumn{7}{c}{\bf Algebraic Kernel, $a = 0.42$}\\
\hline \hline
 dt & h & M & Error $u$ & Order $u$ & Error $v$ &Order $v$\\
 \hline \hline
    0.0025&      0.14648&      $2^{9}$&     2.75E-3&    - &      8.10E-2&    - \\
    0.00125&     0.073242&     $2^{10}$&    6.00E-4&       2.20&     2.75E-2&     1.56\\
    0.000625&     0.036621&     $2^{11}$&    1.51E-4&       1.99&     2.09E-2&    0.39\\
    0.0003125&     0.018311&     $2^{12}$&    7.30E-5&       1.05&    8.66E-3&     1.27\\
    0.00015625&    0.0091553&     $2^{13}$&    2.04E-6&       5.16&    3.15E-6&     11.43\\
  \end{tabular}
\caption{Order of convergence based on $L^1$ error for Algorithm \ref{al:neumann} using the exponential and algebraic kernels with domain size $L = 75/2$, second-order Adams-Bashforth with $dt\sim h/30$, and boundary constraints representing the Neumann problem.}
\label{tab:kernel_exp_alg_neumann_AB2_L1_H_eq_30_DT}
\end{table}

\begin{table}
\centering
\begin{tabular}{  m{1.7cm} m{1.5cm} m{1cm} m{1.7cm} m{1.5cm} m{1.7cm} m{1.5cm}}
 \multicolumn{7}{c}{\bf \Large Continuation: Dirichlet and Real Line Problems}\\[1ex]
 \multicolumn{7}{c}{\bf Exponential Kernel, Dirichlet Problem, $\sigma = 3.4$}\\[1ex]
\hline \hline
 dt & h & M & Error $u$ & Order $u$ & Error $v$ &Order $v$\\
\hline \hline
    0.0025&     0.073242&      $2^{9}$&    4.34E-4 &    - &      2.71E-2    & - \\
    0.00125&     0.036621&     $2^{10}$&    2.28E-4 &   0.93&      1.30E-2  &   1.06\\
    0.000625&     0.018311&     $2^{11}$&    8.90E-5 &    1.36&     4.28E-3&     1.60\\
    0.0003125&    0.0091553&     $2^{12}$&    3.49E-5 &    1.35&     2.34E-3&    0.87\\
    0.00015625&    0.0045776&     $2^{13}$&    6.71E-7 &    5.70&    7.60E-7 &    11.59\\
\hline\\[2ex]
 \multicolumn{7}{c}{\bf Exponential Kernel, Real Line Problem, $\sigma = 3.4$}\\
\hline \hline
 dt & h & M & Error U & Order U & Error V &Order V\\
 \hline \hline
    0.0025  &   0.073242 &     $2^{9}$  &   1.85E-3   &  - &    7.67E-2   &  - \\
    0.00125 &    0.036621 &    $2^{10}$ &   6.05E-4  &   1.61&     2.79E-2   &  1.46 \\
    0.000625&     0.018311&     $2^{11}$&    1.72E-4 &    1.82&    9.19E-3  &   1.60 \\
    0.0003125&    0.0091553&     $2^{12}$&    2.69E-5 &    2.67&    1.33E-3  &   2.79  \\
    0.00015625&    0.0045776&     $2^{13}$&    1.44E-5 &   0.90&    1.23E-3  &  0.11 \\
\hline\\[2ex]
\multicolumn{7}{c}{\bf Algebraic Kernel, Dirichlet Problem, $a = 0.42$}\\
\hline \hline
dt & h & M & Error U & Order U & Error V &Order V\\
\hline \hline
    0.0025&     0.073242&      $2^{9}$&    7.29E-4&    - &   0.071183  &  - \\
    0.00125&     0.036621&     $2^{10}$&    3.11E-4&     1.23&     6.41E-2  &  0.15\\
    0.000625&     0.018311&     $2^{11}$&    1.90E-4&    0.72&     3.18E-2 &    1.01\\
    0.0003125&    0.0091553&     $2^{12}$&     9.01E-5&     1.07&     1.39E-2&     1.20\\
    0.00015625&    0.0045776&     $2^{13}$&     3.15E-5&     1.52&    4.66E-3&      1.58\\
\hline\\[2ex]
\multicolumn{7}{c}{\bf Algebraic Kernel,  Real Line Problem, $a = 0.42$}\\
\hline \hline
dt & h & M & Error U & Order U & Error V &Order V\\
\hline \hline
    0.0025&     0.073242&      $2^{9}$&     1.56E-3&     -&     2.57E-2   &  -\\
    0.00125&     0.036621&     $2^{10}$&    3.79E-4&     2.04&     1.93E-2   & 0.41\\
    0.000625&     0.018311&     $2^{11}$&     9.85E-5&     1.95&    8.33E-3  &   1.22\\
    0.0003125&    0.0091553&     $2^{12}$&    2.84E-5&     1.80&    1.31E-3 &    2.67\\
    0.00015625&    0.0045776&     $2^{13}$&    1.66E-5&    0.78&    1.23E-3&    0.096\\ 
\end{tabular}
\caption{Order of convergence based on $L^1$ error for  Algorithm \ref{al:dirichlet} using the exponential and algebraic kernel with domain size $L = 75/2$, second-order Adams-Bashforth with $dt\sim h /30$, and boundary constraints representing the Dirichlet and Real Line problems.}
\label{tab:kernel_algebraic_dirichlet_real_AB2_L1_H_eq_30_DT}
\end{table}

\bibliographystyle{plain}
\bibliography{nonlocal_bib}

\end{document}